\pgfplotsset{compat=1.18}
\pgfplotsset{colormap name=viridis}
\pgfplotsset{
    legend image with text/.style={%
        legend image code/.code={%
            \node[anchor=center] at (0.3cm,0cm) {#1};
        }
    },
}
\newcommand{\bcom}[1]{\textcolor{olive}{\textbf{Bruno:} #1}}
\newcommand{\new}[1]{#1}
\newcommand{\R}{\mathbb{R}}
\newtheorem{theorem}{Theorem}
\newtheorem{proposition}{Proposition}%
\begin{document}

\title[A relax-fix-and-exclude algorithm for an MINLP problem with multilinear interpolations]{A relax-fix-and-exclude algorithm for an MINLP problem with multilinear interpolations}


\author*[1]{\fnm{Bruno M.} \sur{Pacheco}}\email{bruno.machado.pacheco@umontreal.ca}
\equalcont{The author was affiliated to the Department of Automation and Systems Engineering of the Federal University of Santa Catarina during the time this research was being developed.}

\author[2]{\fnm{Pedro M.} \sur{Antunes}}\email{antunespedromh@gmail.com}

\author[2]{\fnm{Eduardo} \sur{Camponogara}}\email{eduardo.camponogara@ufsc.br}

\author[2]{\fnm{Laio O.} \sur{Seman}}\email{laio.seman@ufsc.br}

\author[3]{\fnm{Vinícius R.} \sur{Rosa}}\email{viniciusrr@petrobras.com.br}

\author[3]{\fnm{Bruno F.} \sur{Vieira}}\email{bfv@petrobras.com.br}

\author[3]{\fnm{Cesar} \sur{Longhi}}\email{cesar.longhi@petrobras.com.br}

\affil[1]{\orgdiv{CIRRELT and Département d'Informatique et de Recherche Opérationnelle}, \orgname{Université de Montréal}, \orgaddress{\city{Montréal}, \state{QC}, \country{Canada}}}

\affil[2]{\orgdiv{Department of Automation and Systems Engineering}, \orgname{Federal University of Santa Catarina}, \orgaddress{\city{Florianópolis}, \state{SC}, \country{Brazil}}}

\affil[3]{\orgdiv{Petrobras Research Center}, \orgaddress{\city{Rio de Janeiro}, \state{RJ}, \country{Brazil}}}


\abstract{%
This paper introduces a novel algorithm for Mixed-Integer Nonlinear Programming (MINLP) problems with multilinear interpolations of look-up tables.
These problems arise when objectives or constraints contain black-box functions only known at a finite set of evaluations on a predefined grid.
We derive a piecewise-linear relaxation for the multilinear \new{interpolants, which require an MINLP formulation}.
Supported by the fact that our proposed relaxation \new{is as tight as possible}, we propose a novel algorithm that iteratively solves the MILP relaxation and refines the solution space through variable fixing and exclusion strategies.
This approach ensures convergence to an optimal solution, which we demonstrate, while maintaining computational efficiency.
We apply the proposed algorithm \new{(Relax-Fix-and-Exclude, RFE)} to a real-world offshore oil production optimization problem.
\new{Compared to the Gurobi solver, RFE was able to solve to optimality 20\% more instances within one hour.
Furthermore, on medium and large instances that both RFE and Gurobi solved to optimality, our algorithm took, on average, 52\% less time than Gurobi to converge, while on all instances that neither solved, RFE always achieved a lower primal-dual gap, on average 91\% smaller.}
}

\keywords{Multilinear Interpolation, Relax-Fix-and-Exclude Algorithm, Mixed-Integer Nonlinear Programming (MINLP)}

\maketitle


\section{Introduction}\label{sec:intro}

\new{%
We consider mixed-integer programming problems where the cost and constraints involve black-box functions---functions known only at a finite number of evaluation points in their domain.
Such functions appear in many practical optimization tasks where the underlying models are complex simulations, result from empirical processes, or simply lack a tractable closed-form expression~\cite{jonesEfficientGlobalOptimization1998,bischlHyperparameterOptimizationFoundations2023,shieldsBayesianReactionOptimization2021,raoEngineeringOptimizationTheory2019a}.
We restrict our attention to the cases where the evaluations of the black-box functions are organized on a rectangular, axis-aligned grid, resulting in the so-called look-up tables~\citep{epelleComputationalPerformanceComparison2020a,grimstadGlobalOptimizationMultiphase2016}.
Consequently, to enable optimization over the continuous domain, an interpolation of the look-up table is required\footnote{To cover the entire domain properly, we would need to extrapolate the information on the look-up table. However, it is usual to assume that the optimal is within the boundaries of the table and thus interpolation suffices.} to generate a continuous function.
}

\new{%
A particularly popular interpolation method in this context is \emph{multilinear interpolation}~\citep{gastellu-etchegorryInterpolationProcedureGeneralizing2003,piniConsistentLookupTable2015,nellesGridBasedLookUpTable2000,nellesLinearPolynomialLookUp2020,bohnOptimizationbasedApproachCalibration2006a,guptaMonotonicCalibratedInterpolated2016,furlanSimpleApproachImprove2016,martinsImplementationMultilinearLookup2020}.
Unlike piecewise-linear approaches, the resulting interpolant preserves smoothness almost everywhere\footnote{The interpolant is continuous but not differentiable at the evaluation points, which are finitely many.}, does not require arbitrary subdivisions (triangulations) of the domain, and scales efficiently with dimensionality.
The downside is that embedding the interpolant in an optimization model introduces multilinear (i.e., non-convex) constraints, making the problem a challenging instance of mixed-integer nonlinear programming.
}

\new{%
This paper introduces a novel algorithm for mixed-integer nonlinear programming (MINLP) problems with functions modeled via multilinear interpolation.
We explore polyhedral tools for multilinear terms and show that they can be used to derive the convex hull of the nonlinearities arising from the interpolation method.
This convex-hull forms the basis of our \emph{Relax-Fix-and-Exclude} algorithm, exploiting it not only to compute tight bounds, but also to guide a depth-first search through variable fixing and generalized no-good cuts.
}

We demonstrate the performance of the proposed algorithm in an offshore oil production optimization problem.
The oil and gas industry is a classical domain application of MINLP over black-box functions, since optimization problems involve complex nonlinear functions representing processes such as reservoir pressure dynamics and multi-phase flow~\citep{beale_mathematical_1983}.
These functions are often derived from simulations or empirical data, making them difficult to incorporate directly into optimization problems.
Furthermore, the proposed approach is well-suited for these applications, where computational efficiency and accuracy are critical.

The key contribution of this work is the development of a piecewise-linear relaxation for the nonlinear interpolated functions. This relaxation leads to an efficient mixed-integer linear programming (MILP) approximation, which provides both a tractable formulation and a reliable lower bound for the original problem. By incorporating Special Ordered Sets of type 2 (SOS2), the method ensures that adjacent grid points are selected during interpolation, preserving the convexity and efficiency of the optimization process. Furthermore, the algorithm leverages fix-and-exclude strategies to refine the solution space progressively and converge to an optimal solution.

\subsection{Related Work}

Many real-world optimization problems involve functions with high evaluation costs, which are usually only known at finitely many points of their domain.
These problems have been known for a long time~\citep{jonesEfficientGlobalOptimization1998} and have appeared in a wide range of application areas, such as machine learning~\citep{bischlHyperparameterOptimizationFoundations2023}, chemistry~\citep{shieldsBayesianReactionOptimization2021}, and engineering~\citep{raoEngineeringOptimizationTheory2019a}.

A well-studied approach for solving such problems is to perform a piecewise-linear interpolation over the look-up tables of the black-box function~\citep{vielma2010modeling,Silva:EJOR:2014,huchette2022piecewise}.
In a grid-like partitioning of the domain, the piecewise-linear interpolation requires additional cuts to reduce the partitioning to simplices, such that the model becomes an injective mapping.
Although theoretically possible for any number of dimensions, this partitioning has only been proposed for domains of up to three dimension~\citep{misenerPiecewiseLinearApproximationsMultidimensional2010}.
On top of that, the number of simplices grows exponentially with the domain dimension, requiring exponentially many more integer variables.
Finally, the choice of simplices (within the hyperrectangular partition) is not an evident design choice, and it may have a significant impact on the approximation quality, e.g., by overestimating or underestimating the true function.

Multilinear interpolations of look-up tables are a common practice to deal with black-box functions or complex non-linear functions, with applications in physics simulation~\citep{gastellu-etchegorryInterpolationProcedureGeneralizing2003,piniConsistentLookupTable2015}, automotive software~\citep{nellesGridBasedLookUpTable2000,nellesLinearPolynomialLookUp2020,bohnOptimizationbasedApproachCalibration2006a}, and even machine learning~\citep{guptaMonotonicCalibratedInterpolated2016}.
For example, \citeauthor{furlanSimpleApproachImprove2016}~\cite{furlanSimpleApproachImprove2016} and \citeauthor{martinsImplementationMultilinearLookup2020}~\cite{martinsImplementationMultilinearLookup2020} replace phenomenological models of a chemical process with a look-up table that is interpolated during the simulation, which yielded improved efficiency at minor accuracy loss.
\new{%
In contrast to the piecewise-linear method, the multilinear interpolation is smooth everywhere on the domain except at a finite number of points.
Furthermore, it naturally scales to arbitrary domain dimensions and requires no design choice during implementation.
}

Optimizing with multilinearly interpolated look-up tables, in contrast to piecewise-linear interpolations, introduces multilinear terms, making the problem, at least, a mixed-integer quadratically constrained programming (MIQCP) problem. 
Optimization with multilinear constraints has been well-studied \cite{rikunConvexEnvelopeFormula1997,kehaModelsRepresentingPiecewise2004,McCormick1976} and continues to be an active area of research \cite{sundarPiecewisePolyhedralFormulations2021,kimPiecewisePolyhedralRelaxations2024}.
In fact, many solvers support multilinear constraints out-of-the-box \cite{gurobi,gleixnerExactFastAlgorithms2015}.
However, to the best of our knowledge, our work is the first to study the optimization problem with multilinear interpolations of look-up tables and to propose an exact algorithm for it.

\subsection{Contributions}

This work brings the following contributions to the technical literature:
\begin{itemize}

  \item \new{An MINLP approximation for a class of real-world problems involving black-box functions using multilinear interpolation with data from sensitivity analysis, which can be solved efficiently.}

  \item \new{A tight MILP relaxation of multilinear interpolation functions arising from piecewise-linear formulations.}

  \item A Relax-Fix-and-Exclude algorithm with convergence guarantee for solving optimization problems with multilinear interpolation functions, which iteratively solves piecewise-linear based relaxations (\textit{Relax}), optimizes over a reduced domain resulting from variable fixing (\textit{Fix}), and removes this domain from the search space (\textit{Exclude}).

  \item A computational demonstration of the efficiency of the relax-fix-and-exclude algorithm and comparison with \new{state-of-the-art MINLP solvers (Gurobi, SCIP, and BARON)} in an application to oil production in offshore platforms. 
\end{itemize}

\subsection{Organization}

Section \ref{problem} introduces the baseline problem, which consists of a mixed-integer nonlinear program with black-box functions approximated through multilinear interpolation of look-up tables, resulting in an MINLP formulation.
Section \ref{algorithm} presents the relax, fix, and exclude steps, which are then combined into the proposed algorithm.
Section \ref{sec:experiments} reports computational results from applying the relax-fix-and-exclude algorithm for oil production optimization in offshore platforms \new{(Section \ref{sec:oil-production-optimization})}, and a comparison with state-of-the-art \new{MINLP solvers Gurobi, SCIP and BARON}.
    Section \ref{sec:conclusion} draws final conclusions and suggests future research directions.


\section{Problem Statement} \label{problem}

This work is concerned with the following  mixed-integer program
\begin{equation}\label{eq:problem}
\begin{split}
    C = \min_{\bm{x},\bm{y}} \quad & \widetilde{f}_0(\bm{x}) + \bm{a}_0^{T} \bm{y}  \\
    \textrm{s.t.} \quad & \widetilde{f}_i(\bm{x}) + \bm{a}_i^{T} \bm{y} \le b_i,\,i=1,\ldots,m \\
      & \bm{x}\in X\subseteq\R^{n}, \bm{y}\in \{0,1\}^{p}
\end{split}
\end{equation}
where each
\begin{equation}
    \begin{split}
	\widetilde{f}_i: X &\longrightarrow \R \\
	\bm{x} &\longmapsto \widetilde{f}_i(\bm{x})
    \end{split}
\end{equation}
\new{is a multilinear interpolation of a black-box function $f_i:X \rightarrow \R$, for $i=0,\ldots,m$.}

\new{%
It is assumed that a \emph{look-up table} is available with evaluations of the black-box functions at a finite set of \emph{breakpoints} $\mathcal{B} \subseteq X$.
In other words, for each $\hat{\bm{x}} \in\mathcal{B}$, we know the value of $f_i(\hat{\bm{x}})$ for all $i=0,\ldots,m$.
}

\new{%
We assume the breakpoints form a rectangular grid $\mathcal{B}=\mathcal{B}_1\times \cdots\times \mathcal{B}_n$, such that each $\mathcal{B}_j = \left\{ \hat{x}_j^{0},\hat{x}_{j}^{1},\hat{x}_{j}^{2},\ldots \right\}$ is a finite set.
Let the breakpoints be indexed in increasing order $\hat{x}^{0}_j<\hat{x}^{1}_j<\hat{x}_{j}^{2}<\cdots$, for $j=1,\ldots,n$.
Further, let $\mathcal{I}_j$ denote the set of \emph{indices} of the breakpoints for the $j$-th variable, such that we can write $\mathcal{B}_j = \{\hat{x}_j^{k}\}_{k\in\mathcal{I}_j}$.
Then, we will let $\mathcal{I}=\mathcal{I}_1\times \cdots \times \mathcal{I}_n$ denote the set of \emph{multi-indices} of the breakpoints, such that $\forall \bm{k}=(k_1,\ldots,k_n)\in\mathcal{I}$, we can write $\hat{\bm{x}}^{\bm{k}} = (\hat{x}_1^{k_1},\ldots,\hat{x}_n^{k_n})\in \mathcal{B}$.
}

\subsection{Multilinear Interpolation}

To develop an understanding of such an interpolation method, let us consider a single black-box function $f: X \subseteq \R^{n} \rightarrow \R$.
\new{%
If $n=1$, we write $x_1=x$, and we have a traditional linear interpolation over $\mathcal{B}=\left\{ \hat{x}^{0},\hat{x}^{1},\hat{x}^{2},\ldots \right\} \subset \R$.
}
Without loss of generality, take $x\in \text{conv}( \mathcal{B} ) $ such that $\hat{x}^0\le x\le \hat{x}^1$.
Then, we can write the linear interpolation of $f(x)$ as
\begin{equation}
\begin{aligned}
\widetilde{f}(x) &= f(\hat{x}^0) + \frac{x - \hat{x}^0}{\hat{x}^1 - \hat{x}^0}(f(\hat{x}^1) - f(\hat{x}^0))  \\
	      &= \underbrace{\left(\frac{\hat{x}^1-x}{\hat{x}^1 - \hat{x}^0}\right)}_{\xi_{0}} f(\hat{x}^0) + \underbrace{\left(\frac{x - \hat{x}^0}{\hat{x}^1 - \hat{x}^0}\right)}_{\xi_{1}}f(\hat{x}^1)
.\end{aligned}
\end{equation}

\new{%
Note that the coefficients  $\xi_0$ and $\xi_1$ of the linear interpolation are such that $\xi_0+\xi_1=1$, and
\begin{equation}
x = \xi_{0}\hat{x}^0 + \xi_{1}\hat{x}^1
,\end{equation}
that is, they match the weights that express $x$ as a convex combination (CC) of two \emph{consecutive} breakpoints.
}
One way to formulate such a convex combination for the one-dimensional case is through the use of an SOS2 variable
\begin{equation}
\begin{aligned}
    x &= \sum_{k \in \mathcal{I}} \xi_{k}\hat{x}^{k} \\
    1 &= \sum_{k \in \mathcal{I}} \xi_{k} \\
      & \bm{\xi} \in \text{SOS2}(\mathcal{I}),\, \new{\bm{\xi} \ge \bm{0}},
\end{aligned}
\end{equation}
which ensures that only two consecutive breakpoints will have the respective coefficients assuming non-zero values.
\new{We use the notation $\text{SOS2}(\mathcal{I})$ to convey that the SOS2 constraint is applied over the ordering defined by $\mathcal{I}$.}
Then, we can write the linear interpolation as
\begin{equation}
\begin{aligned}
    \lambda_k &= \xi_{k},  & \forall k\in \mathcal{I}, \\
    \widetilde{f} &= \sum_{k\in \mathcal{I}} \lambda_k f(\hat{x}^{k})
.\end{aligned}
\end{equation}
\new{Note that we introduce the redundant $\lambda$ variables to differentiate $\xi_k$, the weight of the $k$-th breakpoint, from $\lambda_k$, the weight of the function evaluation at the $k$-th breakpoint, which happen to coincide in the onedimensional case.}
in which we introduce the $\lambda$ variables to differentiate the weights of the breakpoints from the weights of the evaluations of $f$.

\begin{figure}[h]
    \centering
    \begin{subfigure}[t]{0.49\textwidth}
    \centering
    \begin{tikzpicture}
        \begin{axis}[
            axis lines = center,
            point meta min=0.5,
            point meta max=2,
            view={15}{30},
            xmin=0.5,
            xmax=2.5,
            xtick={1,2},
            xticklabels={$\hat{x}^0$,$\hat{x}^1$},
            ymin=0.5,
            ymax=2.5,
            ytick={1,2},
            yticklabels={$\hat{y}^0$,$\hat{y}^1$},
            y tick label style={anchor=south},
            zlabel=$\widetilde{f}$,
            zmin=0,
            zmax=2,
            ztick={0},
            z label style={anchor=east},
            pin distance=0.3cm,
    	]
            \addplot3 [black,dashed] coordinates {
                (1,0.5,0) (1,2.5,0)
                
                (2,0.5,0) (2,2.5,0)
                
                (0.5,1,0) (2.5,1,0)
                
                (0.5,2,0) (2.5,2,0)
            };
            
    	    \addplot3[surf,domain=1:2,shader=interp] {8 - 4.5*x -4.5*y + 3*x*y};

            \addplot3 [black,dotted,mark=*,mark size=1pt] coordinates {
                (1,1,0) (1,1,2)
                
                (2,2,0) (2,2,2)
                
                (2,1,0) (2,1,0.5)
                
                (1,2,0) (1,2,0.5)
            };
            \node [coordinate,pin=right:{$f(\hat{x}^1,\hat{y}^1)$}] at (axis cs:2,2,2) {};
        \end{axis}
    \end{tikzpicture}
    \caption{Bilinear interpolation.}\label{fig:bilinear-interp}
    \end{subfigure}
    \begin{subfigure}[t]{0.49\textwidth}
    \centering
    \begin{tikzpicture}
        \begin{axis}[
            axis lines = center,
            point meta min=0.5,
            point meta max=2,
            view={15}{30},
            xmin=0.5,
            xmax=2.5,
            xtick={1,2},
            xticklabels={$\hat{x}^0$,$\hat{x}^1$},
            ymin=0.5,
            ymax=2.5,
            ytick={1,2},
            yticklabels={$\hat{y}^0$,$\hat{y}^1$},
            y tick label style={anchor=south},
            zlabel=$\widetilde{f}$,
            zmin=0,
            zmax=2,
            ztick={0},
            z label style={anchor=east},
	]
            \addplot3 [black,dashed] coordinates {
                (1,0.5,0) (1,2.5,0)
                
                (2,0.5,0) (2,2.5,0)
                
                (0.5,1,0) (2.5,1,0)
                
                (0.5,2,0) (2.5,2,0)
            };



            \addplot3[patch,patch refines=0,shader=faceted interp]  coordinates {
                (1,1,2)
                (1,2,0.5)
                (2,2,2)
                
                (1,2,0.5)
                (2,2,2)
                (2,1,0.5)

                (1,1,2)
                (1,2,0.5)
                (2,1,0.5)

                (1,1,2)
                (2,1,0.5)
                (2,2,2)
            };

            \addplot3 [black,dotted,mark=*,mark size=1pt] coordinates {(1,1,0) (1,1,2)};
            \addplot3 [black,dotted,mark=*,mark size=1pt] coordinates {(2,2,0) (2,2,2)};
            \addplot3 [black,dotted,mark=*,mark size=1pt] coordinates {(2,1,0) (2,1,0.5)};
            \addplot3 [black,dotted,mark=*,mark size=1pt] coordinates {(1,2,0) (1,2,0.5)};
        \end{axis}
    \end{tikzpicture}
    \caption{Piecewise linear relaxation.}\label{fig:pwl-relaxation}
    \end{subfigure}
    \caption{Example of multilinear interpolation and the respective piecewise-linear relaxation for a black-box function on a two-dimensional domain. The interpolation is illustrated on a single rectangle of the sensitivity analysis grid, namely $[\hat{x}^0,\hat{x}^1] \times [\hat{y}^0,\hat{y}^1]$.}
    \label{fig:bilinear-interp-example}
\end{figure}

For $n>1$, the multilinear interpolation over grids can be defined recursively~\citep{weiser_note_1988}.
A bidimensional example is illustrated in Figure~\ref{fig:bilinear-interp}.
We get into details for the case $n=3$.
To ease the notation, let $x_1=x$, $x_2=y$, and $x_3=z$.
Then, for $\left( x,y,z \right) \in \text{conv}(\mathcal{B}_x\times \mathcal{B}_y\times \mathcal{B}_z)$, assume, without loss of generality, that $\hat{x}^0,\hat{x}^1\in \mathcal{B}_x$, $\hat{y}^0,\hat{y}^1\in \mathcal{B}_y$, and $\hat{z}^0,\hat{z}^1\in \mathcal{B}_z$ are the vertices of the rectangular prism that contains $\left( x,y,z \right) $, i.e., $\hat{x}^0\le x\le \hat{x}^1$, $\hat{y}^0\le y\le \hat{y}^1$, and  $\hat{z}^0\le z\le \hat{z}^1$.
By interpolating each dimension recursively, the approximation can be written
\begin{equation}\label{eq:trilinear-recursive}
\begin{aligned}
    \widetilde{f} &= \frac{\hat{z}^1-z}{\hat{z}^1-\hat{z}^0}\widetilde{f}_{x,y}\!\left( \hat{z}^0 \right) + \frac{z-\hat{z}^0}{\hat{z}^1-\hat{z}^0}\widetilde{f}_{x,y}\!\left( \hat{z}^1 \right)   \\
    \widetilde{f}_{x,y}\!\left( \hat{z}^{0|1} \right)  &= \frac{\hat{y}^1-y}{\hat{y}^1-\hat{y}^0} \widetilde{f}_x\!\left( \hat{y}^0, \hat{z}^{0|1} \right) + \frac{y-\hat{y}^0}{\hat{y}^1-\hat{y}^0} \widetilde{f}_{x}\!\left( \hat{y}^1,\hat{z}^{0|1} \right)  \\
    \widetilde{f}_{x}\!\left( \hat{y}^{0|1},\hat{z}^{0|1} \right) &= \frac{\hat{x}^1-x}{\hat{x}^1-\hat{x}^0}f\!\left( \hat{x}^0,\hat{y}^{0|1},\hat{z}^{0|1} \right) + \frac{x-\hat{x}^0}{\hat{x}^1-\hat{x}^0} \widetilde{f}\!\left( \hat{x}^1,\hat{y}^{0|1},\hat{z}^{0|1} \right)
,\end{aligned}
\end{equation}
where $\hat{z}^{0|1}$ denotes either $\hat{z}^0$ or $\hat{z}^1$, and $\hat{y}^{0|1}$ likewise.
Expanding the approximation $\widetilde{f}$ above in terms of products, we can rewrite the interpolated function \(\widetilde{f}\) as a summation over products of the basis functions for each variable \( x \), \( y \), and \( z \), 
\begin{equation}
\new{\widetilde{f} = \sum_{(k,j,i)\in\{0,1\}^3} \xi^{x}_k \xi^{y}_j \xi^{z}_i  f(\hat{x}^k, \hat{y}^j, \hat{z}^i)} \label{eq:trilinear-recursive:prod}
\end{equation}
where
\begin{equation}
\begin{aligned}
\xi^x_{k} &= 
\begin{cases}
    \frac{\hat{x}^1 - x}{\hat{x}^1 - \hat{x}^0}, & \text{if } k = 0, \\
    \frac{x - \hat{x}^0}{\hat{x}^1 - \hat{x}^0}, & \text{if } k = 1,
\end{cases} \\
\xi^y_{j} &= 
\begin{cases}
    \frac{\hat{y}^1 - y}{\hat{y}^1 - \hat{y}^0}, & \text{if } j = 0, \\
    \frac{y - \hat{y}^0}{\hat{y}^1 - \hat{y}^0}, & \text{if } j = 1,
\end{cases}  \\
\xi^z_{i} &= 
\begin{cases}
    \frac{\hat{z}^1 - z}{\hat{z}^1 - \hat{z}^0}, & \text{if } i = 0, \\
    \frac{z - \hat{z}^0}{\hat{z}^1 - \hat{z}^0}, & \text{if } i = 1,
\end{cases}
\end{aligned}
\end{equation}
\new{and \(\xi^{x}_k\) serves as the basis function for variable $x$, with $k$ iterating over $0$ and $1$, and similarly for the other two variables}.

Like in the one-dimensional case, each $\xi$ is a coefficient of the CC of breakpoints that results in $\left( x,y,z \right) $, i.e.,
\begin{equation}
\begin{aligned}
    x &= \xi^x_{0}\hat{x}^0 + \xi^x_{1}\hat{x}^1 , \\
    y &= \xi^y_{0}\hat{y}^0 + \xi^y_{1}\hat{y}^1 , \\
    z &= \xi^z_{0}\hat{z}^0 + \xi^z_{1}\hat{z}^1 , \\
    1 &=  \xi^x_{0} +  \xi^x_{1} , \\
    1 &=  \xi^y_{0} +  \xi^y_{1} , \\
    1 &=  \xi^z_{0} +  \xi^z_{1} , \\
    0 &\leq \xi^x_{0}, \xi^x_{1}, \xi^y_{0}, \xi^y_{1}, \xi^z_{0}, \xi^z_{1}
.\end{aligned}
\end{equation}
Thus, \(\widetilde{f}\) is a weighted sum where each term is the product of basis functions for each variable, multiplied by the corresponding function value at that grid point, \(f(\hat{x}^k, \hat{y}^j, \hat{z}^i)\).
We can rewrite the interpolation \eqref{eq:trilinear-recursive} as
\begin{equation}
\widetilde{f} = \sum_{\left( k,j,i \right) \in \left\{ 0,1 \right\}^{3}} 
\lambda_{k,j,i}f(\hat{x}^k,\hat{y}^j,\hat{z}^i)
,
\end{equation}
where $\lambda_{k,j,i} = \xi^x_{k}\xi^y_{j}\xi^z_{i}$, for all $(k,j,i)\in\mathcal{I}$.

More generally, we can express the multilinear interpolation of a function of $n$ variables by using SOS2 vectors $\bm{\xi}^j$ as coefficients of the CC of the breakpoints $\mathcal{B}_j$ into $x_j$, for $j=1,\ldots,n$.
Precisely, we impose
\begin{equation}\label{eq:input-constraints}
\begin{aligned}
    x_{j} &= \sum_{k\in\mathcal{I}_j} \xi^j_{k} \hat{x}_{j}^{k} ,  \\
    1 &= \sum_{k\in\mathcal{I}_j} \xi^j_{k} , \\
      & \bm{\xi^j} \in \text{SOS2}(\mathcal{I}_j) 
,\end{aligned} 
\end{equation}
for each $j=1,\ldots,n$.
\new{Note that, in the above formulation, an SOS2 constraint is imposed at each vector variable $\bm{\xi}^j\in\R^{|\mathcal{I}_j|}$ individually.}
The interpolation weights are then
\begin{equation}\label{eq:interpolation-bilinears}
   \lambda_{\bm{k}} = \prod_{j=1}^{n} \xi^j_{k_j},\quad \forall\bm{k} \in \mathcal{I}
.\end{equation}
Finally, the formulation of $\widetilde{f}$ as a multilinear interpolation becomes
\begin{equation}\label{eq:mult-interp}
   \widetilde{f} = \sum_{\bm{k}\in\mathcal{I}} \lambda_{\bm{k}}  f(\hat{\bm{x}}^{\bm{k}})
.
\end{equation}

\subsection{MINLP \new{Formulation}}

By considering the formulation of the multilinear interpolant in \eqref{eq:mult-interp}, we can write problem \eqref{eq:problem} as an MINLP
\begin{equation}\label{eq:minlp}\tag{$\mathcal{P}$}
\begin{aligned}
    C = \min_{\bm{\xi},\bm{y}} \quad & \widetilde{f}_0 + \bm{c}^{T} \bm{y}  \\
    \textrm{s.t.} \quad & \widetilde{f}_i + \bm{a}_i^{T} \bm{y} \le b_i, & i=1,\ldots,m, \\
			& \widetilde{f}_i = \sum_{\bm{k}\in\mathcal{I}} \lambda_{\bm{k}}  f_i(\hat{\bm{x}}^{\bm{k}}), & i=0,\ldots,m, \\
   &\lambda_{\bm{k}} = \prod_{j=1}^{n} \xi^j_{k_j}, &
     \forall \bm{k}\in\mathcal{I}, \\
   &x_{j} = \sum_{k\in\mathcal{I}_j} \xi^j_{k} \hat{x}_{j}^{k}, &j=1,\ldots,n, \\
   &1 = \sum_{k\in\mathcal{I}_j} \xi^j_{k}, &j=1,\ldots,n, \\
      & \bm{\xi^j} \in \text{SOS2}(\mathcal{I}_j), \bm{\xi^j}\geq \mathbf{0} & j=1,\ldots,n, \\
      & \bm{x}\in X, \,\bm{y}\in \left\{ 0,1 \right\}^{p}
,\end{aligned}
\end{equation}
where each $f_i(\hat{\bm{x}}^{\bm{k}})$ is obtained from the look-up table.
\new{%
Variables $\bm{x}$, $\bm{\lambda}$, and $\widetilde{\bm{f}}$ are functionally dependent on $\bm{\xi}$ and are introduced solely for notational convenience.
In other words, \ref{eq:minlp} is effectively a problem over the $(\bm{\xi},\bm{y})$ extended space.
Furthermore, we have as many $\xi^j_k$ variables as breakpoints along each $j$-th dimension, that is, $\bm{\xi}^j\in \R^{|\mathcal{B}_j|}$, for $j=1,\ldots,n$.
}

\new{%
Note that the addition of constraints \eqref{eq:input-constraints} implies $\bm{x} \in \text{conv}(\mathcal{B}) \cap X$.
So it is an implicit assumption that the optimal solution to the problem can be expressed as a convex combination of the breakpoints.
}


\section{The Relax-Fix-and-Exclude Algorithm} \label{algorithm}

\new{Problem \ref{eq:minlp} is a nonconvex Mixed-Integer Quadratically-Constrained Quadratic Programming (MIQCQP) problem, which, despite being supported by modern solvers like Gurobi~\citep{gurobi}, BARON~\cite{sahinidisBARONGeneralPurpose1996,zhangSolvingContinuousDiscrete2024}, and SCIP~\cite{BolusaniEtal2024OO,BolusaniEtal2024ZR},} is a notoriously hard class of problems.
\new{%
Classical methods for this class of problems are based on sequentially solving MILP relaxations and NLP subproblems~\citep{BIEGLER20041169}.
Our solution approach for \ref{eq:minlp} is based on a tight MILP relaxation, which we derive in Section~\ref{sec:milp-relaxation}.
However, beyond just using the relaxation to compute tighter bounds, or refining it with cutting planes in an outer approximation fashion, we opt for an enumeration approach, using the relaxation to guide our exploration of the solution space, similar to a diving heuristic.
}

\new{%
The intuition behind our approach is that for smooth functions and a sufficiently dense set of breakpoints, the solution to the MILP relaxation will lie on the same hyperrectangle of the domain as the optimal solution to the original MINLP.
Thus, we use a solution of the MILP relaxation to determine which NLP subproblem to solve by \emph{fixing} the discrete decisions of MINLP problem, restricting it to a hyperrectangle of the domain (Section~\ref{sec:fixing}).
Then, we \emph{exclude} said hyperrectangle from the solution space of the MILP relaxation, such that re-optimizing it will result in either a tighter bound or a certificate of optimality.
Furthermore, the new solution to the MILP relaxation will lie in a different hyperrectangle, focusing on a new NLP subproblem.
}

\subsection{MILP Relaxation}\label{sec:milp-relaxation}

\new{%
The only nonlinear terms in \ref{eq:minlp} are the non-convex constraints \eqref{eq:interpolation-bilinears} over the $\xi$-space, which come from the multilinear interpolation of the black-box functions.
The tightest possible linear relaxation for each constraint\footnote{By ``linear relaxation'' of a constraint, we mean a polyhedron that contains the set defined by the constraint.} in \eqref{eq:interpolation-bilinears} is precisely its convex hull.
This is because the set defined by each \eqref{eq:interpolation-bilinears} is the graph of a multilinear constraint, and the convex envelope of a multilinear function is  polyhedral~\citep{rikunConvexEnvelopeFormula1997,sundarPiecewisePolyhedralFormulations2021}.
}
We derive the desired MILP relaxation of \ref{eq:minlp} by exploiting the traditional SOS2 formulation for piecewise-linear functions~\citep{bealeTomlin1970,kehaModelsRepresentingPiecewise2004}\new{, and we show that a simple modification yields precisely the desired convex envelopes}.
\new{In other words, we show that the resulting MILP relaxation is as tight as possible, as fixing the discrete variables always results in the convex hull of the MINLP problem (under the same fixing values).}

\new{%
Recall that our breakpoints $\mathcal{B}$ are distributed in a rectangular grid, such that the domain variables $\bm{x}$ always lie in a hyperrectangle of consecutive breakpoints.
In a piecewise-linear \emph{interpolation} over $\mathcal{B}$, the SOS2 constraints over $\bm{\xi}$ are used to express $\bm{x}$ as a CC of the hyperrectangle's vertices, exactly as in our MINLP formulation \eqref{eq:minlp} of the multilinear interpolants $\widetilde{f}$.
However, in the piecewise-linear case, $\bm{x}$ is further constrained to be a CC of \emph{at most} $n$ vertices of any given hyperrectangle of the domain, effectively further partitioning the domain into simplexes~\citep{misenerPiecewiseLinearApproximationsMultidimensional2010}.
Figure~\ref{fig:piecewise-interp-example} illustrates the piecewise-linear interpolation for the case of a function over two variables.
Note that the domain is naturally partitioned in rectangles by the breakpoints, then it is further subdivided into triangles (2D simplexes) whose vertices are the $n$ breakpoints used in the CC.
}

\begin{figure}[h]
    \centering
    \begin{subfigure}[t]{0.49\textwidth}
    \centering
    \begin{tikzpicture}
        \begin{axis}[
            axis lines = center,
            point meta min=0.5,
            point meta max=2,
            view={15}{30},
            xmin=0.5,
            xmax=2.5,
            xtick={1,2},
            xticklabels={$\hat{x}^0$,$\hat{x}^1$},
            ymin=0.5,
            ymax=2.5,
            ytick={1,2},
            yticklabels={$\hat{y}^0$,$\hat{y}^1$},
            y tick label style={anchor=south},
            zlabel=$\widetilde{f}$,
            zmin=0,
            zmax=2,
            ztick={0},
            z label style={anchor=east},
	]
            \addplot3 [black,dashed] coordinates {
                (1,0.5,0) (1,2.5,0)
                
                (2,0.5,0) (2,2.5,0)
                
                (0.5,1,0) (2.5,1,0)
                
                (0.5,2,0) (2.5,2,0)
		
                (1,1,0) (2,2,0)
            };
            \addplot3[patch,patch refines=0,shader=faceted interp]  coordinates {
                (1,1,2)
                (1,2,0.5)
                (2,2,2)
                
                (1,1,2)
                (2,1,0.5)
                (2,2,2)
            };
            \addplot3 [black,dotted,mark=*,mark size=1pt] coordinates {(1,1,0) (1,1,2)};
            \addplot3 [black,dotted,mark=*,mark size=1pt] coordinates {(2,2,0) (2,2,2)};
            \addplot3 [black,dotted,mark=*,mark size=1pt] coordinates {(2,1,0) (2,1,0.5)};
            \addplot3 [black,dotted,mark=*,mark size=1pt] coordinates {(1,2,0) (1,2,0.5)};
        \end{axis}
    \end{tikzpicture}
    \caption{}\label{fig:pwl-interp-a}
    \end{subfigure}
    \begin{subfigure}[t]{0.49\textwidth}
    \centering
    \begin{tikzpicture}
        \begin{axis}[
            axis lines = center,
            point meta min=0.5,
            point meta max=2,
            view={15}{30},
            xmin=0.5,
            xmax=2.5,
            xtick={1,2},
            xticklabels={$\hat{x}^0$,$\hat{x}^1$},
            ymin=0.5,
            ymax=2.5,
            ytick={1,2},
            yticklabels={$\hat{y}^0$,$\hat{y}^1$},
            y tick label style={anchor=south},
            zlabel=$\widetilde{f}$,
            zmin=0,
            zmax=2,
            ztick={0},
            z label style={anchor=east},
	]
            \addplot3 [black,dashed] coordinates {
                (1,0.5,0) (1,2.5,0)
                
                (2,0.5,0) (2,2.5,0)
                
                (0.5,1,0) (2.5,1,0)
                
                (0.5,2,0) (2.5,2,0)
		
                (1,2,0) (2,1,0)
            };

            \addplot3[patch,patch refines=0,shader=faceted interp]  coordinates {
                (1,2,0.5)
                (2,2,2)
                (2,1,0.5)

                (1,1,2)
                (1,2,0.5)
                (2,1,0.5)
            };

            \addplot3 [black,dotted,mark=*,mark size=1pt] coordinates {(1,1,0) (1,1,2)};
            \addplot3 [black,dotted,mark=*,mark size=1pt] coordinates {(2,2,0) (2,2,2)};
            \addplot3 [black,dotted,mark=*,mark size=1pt] coordinates {(2,1,0) (2,1,0.5)};
            \addplot3 [black,dotted,mark=*,mark size=1pt] coordinates {(1,2,0) (1,2,0.5)};
        \end{axis}
    \end{tikzpicture}
    \caption{}\label{fig:pwl-interp-b}
    \end{subfigure}
    \caption{\new{Example of piecewise-linear interpolation with the same look-up table but different triangulations of the domain.}}
    \label{fig:piecewise-interp-example}
\end{figure}

\new{%
We derive a piecewise-linear \emph{relaxation} by removing the constraint that the domain variable must be a CC of at most $n$ consecutive breakpoints, i.e., allowing it to be expressed as a convex combination of all $2n$ consecutive breakpoints.
This allows the interpolant variable $\widetilde{f}$ to assume any value between all possible piecewise-linear interpolations, thus, the convex envelope.
Fig.~\ref{fig:pwl-relaxation} illustrates the piecewise-linear relaxation of the function illustrated in Fig.~\ref{fig:piecewise-interp-example}.
Note how the facets of the convex envelope in Fig.~\ref{fig:pwl-relaxation} match exactly the graphs of the interpolants in Fig.~\ref{fig:pwl-interp-a} and \ref{fig:pwl-interp-b}.
}

\new{%
Since our multilinear interpolation is already formulated through SOS2 constraints, we need only to replace constraints \eqref{eq:interpolation-bilinears} by
\begin{equation}\label{eq:relaxed-interpolation-bilinears}
     \xi^j_{\hat{k}_j} = \sum_{\substack{\bm{k} \in \mathcal{I}: \\ k_j = \hat{k}_j}} \lambda_{\bm{k}},\quad \forall \hat{k}_j\in \mathcal{I}_j,\, j=1,\ldots,n .
\end{equation}
Then, our MILP relaxation of \ref{eq:minlp} is
}
\begin{equation}\label{eq:milp}\tag{$\overline{\mathcal{P}}$ }
\begin{aligned}
    \overline{C} = \min_{\bm{\xi},\bm{\lambda},\bm{y}} \quad & \widetilde{f}_0 + \bm{c}^{T} \bm{y}  \\
    \textrm{s.t.} \quad & \widetilde{f}_i + \bm{a}_i^{T} \bm{y} \le b_i, & i=1,\ldots,m \\
			& \widetilde{f}_i = \sum_{\bm{k}\in\mathcal{I}} \lambda_{\bm{k}}  f_i(\hat{\bm{x}}^{\bm{k}}), & i=0,\ldots,m, \\
        &\xi^j_{\hat{k}_j} = \sum_{\substack{\bm{k} \in \mathcal{I}: \\ k_j = \hat{k}_j}} \lambda_{\bm{k}},& \forall \hat{k}_j\in \mathcal{I}_j,\, j=1,\ldots,n, \\
   &x_{j} = \sum_{k\in\mathcal{I}_j} \xi^j_{k} \hat{x}_{j}^{k}, &j=1,\ldots,n, \\
   &1 = \sum_{k\in\mathcal{I}_j} \xi^j_{k}, &j=1,\ldots,n, \\
      & \bm{\xi^j} \in \text{SOS2}(\mathcal{I}_j), & j=1,\ldots,n, \\
      & \bm{\lambda} \in [0,1]^{|\mathcal{I}|}, \\
      & \bm{x}\in X, \,\bm{y}\in \left\{ 0,1 \right\}^{p} 
.\end{aligned}
\end{equation}
\new{%
Differently from problem \ref{eq:minlp}, $\bm{\lambda}$ is not functionally dependent on $\bm{\xi}$ in \ref{eq:milp}, that is, \ref{eq:milp} is effectively a problem over the $(\bm{\xi},\bm{\lambda},\bm{y})$ extended space.
Furthermore, the number of $\lambda$ variables grows exponentially with $n$, as there must be one $\lambda$ variable for each element in $\mathcal{B} = \mathcal{B}_1\times \cdots \times \mathcal{B}_n$. However, at most $2^n$ of them are non-null in any feasible solution, once the SOS2 constraints over the $\xi$ variables force every $\lambda$ variable that is not a vertex of the selected hyperrectangle to be 0.
}

\new{%
Problem \ref{eq:milp} contains the tightest possible relaxation of the multilinear constraints of \ref{eq:minlp}, given that each $\lambda_{\bm{k}}$ variable must be a convex combination of the product of the $\xi^1_{k_1},\ldots,\xi^n_{k_n}$ variables at their extremes~\citep{rikunConvexEnvelopeFormula1997,sundarPiecewisePolyhedralFormulations2021}.
}
For completeness, we demonstrate through Proposition~\ref{prop} that the solution space of \ref{eq:milp} is indeed a relaxation of the solution space of \ref{eq:minlp}, which is a necessary condition for the correctness of Algorithm~\ref{alg:algorithm}.

\begin{proposition}\label{prop}
    Problem \ref{eq:milp} is a relaxation of \ref{eq:minlp}.
\end{proposition}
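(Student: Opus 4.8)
The plan is to show that every feasible point of \ref{eq:minlp} gives rise to a feasible point of \ref{eq:milp} with the same objective value. Since the two problems share the objective $\widetilde{f}_0 + \bm{c}^{T}\bm{y}$, and this objective depends only on $\bm{y}$ and on the $\lambda_{\bm{k}}$ (through the equations $\widetilde{f}_i = \sum_{\bm{k}} \lambda_{\bm{k}} f_i(\hat{\bm{x}}^{\bm{k}})$), it suffices to exhibit, for each feasible $(\bm{\xi},\bm{y})$ of \ref{eq:minlp}, a choice of $\bm{\lambda}$ making $(\bm{\xi},\bm{\lambda},\bm{y})$ feasible for \ref{eq:milp} while keeping the $\lambda_{\bm{k}}$ values unchanged.

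First I would take an arbitrary feasible solution of \ref{eq:minlp}, which determines $\bm{\xi}$, $\bm{y}$, and hence (by the defining equations) $\bm{x}$, $\bm{\lambda}$, and $\widetilde{\bm{f}}$. Define the candidate point for \ref{eq:milp} by keeping $\bm{\xi}$, $\bm{y}$, $\bm{x}$, $\widetilde{\bm{f}}$ and setting $\lambda_{\bm{k}} = \prod_{j=1}^{n} \xi^j_{k_j}$ exactly as in \ref{eq:minlp}. Then I would verify the constraints of \ref{eq:milp} one at a time. The constraints involving $\widetilde{f}_i$, $\bm{a}_i^T\bm{y} \le b_i$, the $\bm{x}$ definition, the normalization $1 = \sum_k \xi^j_k$, the SOS2 conditions, and $\bm{x}\in X$, $\bm{y}\in\{0,1\}^p$ all carry over verbatim since they appear identically (or are implied) in \ref{eq:minlp}. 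The only genuinely new things to check are: (i) the linking constraints \eqref{eq:relaxed-interpolation-bilinears}, namely $\xi^j_{\hat{k}_j} = \sum_{\bm{k}\in\mathcal{I}:\, k_j = \hat{k}_j} \lambda_{\bm{k}}$; and (ii) the box constraint $\bm{\lambda}\in[0,1]^{|\mathcal{I}|}$.

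For (i), the key computation is that, for fixed $j$ and fixed $\hat{k}_j\in\mathcal{I}_j$,
\begin{equation}
\sum_{\substack{\bm{k}\in\mathcal{I}:\\ k_j=\hat{k}_j}} \lambda_{\bm{k}}
= \sum_{\substack{\bm{k}\in\mathcal{I}:\\ k_j=\hat{k}_j}} \prod_{\ell=1}^{n}\xi^{\ell}_{k_\ell}
= \xi^{j}_{\hat{k}_j} \prod_{\ell\ne j}\left(\sum_{k_\ell\in\mathcal{I}_\ell}\xi^{\ell}_{k_\ell}\right)
= \xi^{j}_{\hat{k}_j},
\end{equation}
where the middle equality factors the product over the independent index sets and the last uses $\sum_{k\in\mathcal{I}_\ell}\xi^{\ell}_k = 1$ for every $\ell$. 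For (ii), $\lambda_{\bm{k}} = \prod_j \xi^j_{k_j} \ge 0$ since each $\xi^j_{k_j}\ge 0$, and $\lambda_{\bm{k}}\le 1$ follows because each factor is at most $1$ (each $\xi^j_{k_j}\le \sum_k \xi^j_k = 1$ by nonnegativity and the normalization). This establishes that the constructed point is feasible for \ref{eq:milp}, and since its objective value equals that of the original \ref{eq:minlp} solution, $\overline{C}\le C$, so \ref{eq:milp} is a relaxation.

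I do not expect a serious obstacle here; the statement is essentially bookkeeping. The one point requiring a little care is making explicit that the objective and all the shared constraints of \ref{eq:milp} are functions only of quantities ($\bm{\xi},\bm{\lambda},\bm{y}$ and their derived $\bm{x},\widetilde{\bm{f}}$) that are preserved under the construction, so that ``same objective value'' is immediate; and being careful that the normalization identities $\sum_{k\in\mathcal{I}_j}\xi^j_k = 1$ needed in the factorization step are indeed among the constraints of \ref{eq:minlp}, which they are. If one wanted to be thorough, one could also remark that feasibility of \ref{eq:minlp} is nonempty only when $\text{conv}(\mathcal{B})\cap X \ne \emptyset$, but this is not needed for the relaxation claim itself.
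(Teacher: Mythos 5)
Your proposal is correct and follows essentially the same route as the paper: the paper's proof also reduces the claim to verifying that any feasible point of \ref{eq:minlp} satisfies the linking constraints \eqref{eq:relaxed-interpolation-bilinears}, and establishes this via the identical factorization $\sum_{\bm{k}:k_j=\hat{k}_j}\prod_{\ell}\xi^{\ell}_{k_\ell} = \xi^j_{\hat{k}_j}\prod_{\ell\neq j}\sum_{k_\ell}\xi^{\ell}_{k_\ell} = \xi^j_{\hat{k}_j}$ using the normalization constraints. Your additional check of the box constraint $\bm{\lambda}\in[0,1]^{|\mathcal{I}|}$ is a small point of extra care that the paper leaves implicit.
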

\begin{proof}
    Because the only change from \ref{eq:minlp} to \ref{eq:milp} is the removal of constraint \eqref{eq:interpolation-bilinears} and the addition of constraint \eqref{eq:relaxed-interpolation-bilinears}, it suffices to show that any solution of \ref{eq:minlp} satisfies \eqref{eq:relaxed-interpolation-bilinears}.
    Thus, let $( \widetilde{\bm{x}}, \widetilde{\bm{y}}, \widetilde{\bm{\xi}},\widetilde{\bm{\lambda}} ) $ be a feasible solution of \ref{eq:minlp}.
    \new{%
    Consider first the case $j=1$ and take any $\hat{k}_1\in\mathcal{I}_1$.
    Because $\widetilde{\lambda}_{\bm{k}}$ respects \eqref{eq:interpolation-bilinears} for any $\bm{k}\in\mathcal{I}$, we have
    \begin{equation}
    \begin{aligned}
        \sum_{\substack{\bm{k} \in \mathcal{I}: \\ k_1 = \hat{k}_1}} \widetilde{\lambda}_{\bm{k}}
            &= \sum_{\substack{\bm{k} \in \mathcal{I}: \\ k_1 = \hat{k}_1}} \prod_{j=1}^{n} \widetilde{\xi}^j_{k_j} \\
            &= \widetilde{\xi}^1_{\hat{k}_1} \sum_{k_2 \in \mathcal{I}_2} \widetilde{\xi}^2_{k_2} \sum_{k_3 \in \mathcal{I}_3} \widetilde{\xi}^3_{k_3} \cdots \sum_{k_n \in \mathcal{I}_n}  \widetilde{\xi}^n_{k_n} \\
            &= \widetilde{\xi}^1_{\hat{k}_1},
    \end{aligned}
    \end{equation}
    where the last equality comes from $\widetilde{\bm{\xi}}$ satisfying
    \begin{equation}
	\sum_{k_j\in \mathcal{I}_j} \widetilde{\xi}^j_{k_j} = 1,\,j=1,\ldots,n
    \end{equation}
    as it is feasible for \ref{eq:minlp}.
    Therefore, the first constraint of \eqref{eq:relaxed-interpolation-bilinears} is satisfied, given that the choice for $\hat{k}_1$ was arbitrary.
    \emph{Mutatis mutandis}, the same can be shown for $j=2,\ldots,n$, implying that all constraints in \eqref{eq:relaxed-interpolation-bilinears} are satisfied by a feasible solution of \ref{eq:minlp}.
    }
\end{proof}

One interesting characteristic of the proposed piecewise relaxation is that it can also be used to relax multilinear constraints.
For example, suppose that function $f_m$ in problem \eqref{eq:problem} is not a black-box function, but rather a known function of the form
\begin{equation}
    f_m(\bm{x}) = \sum_{(i,j)} a_{ij}x_i x_j
.\end{equation}
\new{Then, its multilinear interpolation is exact and} the relaxation of $f_m$ in \ref{eq:milp} will match precisely the piecewise McCormick envelopes~\citep{CASTRO2015300} at the partitioning induced by the breakpoints of the look-up table, since that also characterizes the convex hull in such cases~\citep{al-khayyalJointlyConstrainedBiconvex1983}.

\subsection{Fixing}\label{sec:fixing}

\new{%
In our algorithm, we find candidate solutions by solving NLP subproblems that stem from fixing all discrete decisions of \ref{eq:minlp}.
This is a usual step in mixed-integer programming which consists of fixing the assignment of the integer variables.
In \ref{eq:minlp}, the SOS2 constraints make this process more involved, since the discrete decisions are not explicit.
}

\new{%
We begin by observing that the SOS2 constraints $\bm{\xi}^j\in \text{SOS2}(\mathcal{I}_j)$ can be equivalently formulated as
\begin{equation}\label{eq:sos2-binaries}
    \bm{\xi}^j \le \bm{\zeta}^{j} \text{ and }  \bm{\zeta}^{j}\in \text{SOS2}(\mathcal{I}_j)
,\end{equation}
for $\bm{\zeta}^{j} \in \{0,1\}^{|\mathcal{B}_j|}$ and $j=1,\ldots,n$.
Each $\bm{\zeta}^{j}$ can be understood as the discrete decisions associated with the SOS2 constraint over $\bm{\xi}^j$, that is, the selection of the pair of consecutive breakpoints along the $j$-th axis.
}

\new{%
Now, take $\overline{\bm{y}} \in \{0,1\}^{p}$ and $\overline{\bm{\zeta}}^j\in \text{SOS2}(\mathcal{I}_j) \cap \{0,1\}^{|\mathcal{B}_j|}$, for every $j=1,\ldots,n$.
Then, it is easy to see that fixing $\bm{y}=\overline{\bm{y}}$ and $\bm{\xi}^j \le \bm{\zeta}^{j},\,j=1,\ldots,n$, reduces \ref{eq:minlp} to an NLP.
More formally, let $P$ denote the feasible region of \ref{eq:minlp}, and define
\begin{equation}\label{eq:nlp-subproblem}
    P(\bm{\overline{\zeta}},\bm{\overline{y}}) \triangleq \left\{ (\bm{\xi},\bm{y}) \in P : \bm{y}=\overline{\bm{y}},\, \bm{\xi}^j\le \overline{\bm{\zeta}}^j,\, j=1,\ldots,n \right\}
\end{equation}}
\new{%
If each $\overline{\bm{\zeta}}^j\in \text{SOS2}(\mathcal{I}_j)$, then at most two dimensions of $\bm{\xi}^j$ are \emph{not} fixed to $0$ in $P(\bm{\overline{\zeta}},\bm{\overline{y}})$, i.e., the dimensions over which $\bm{\xi}^j$ can assume positive value.
   Further, by projecting $P(\bm{\overline{\zeta}},\bm{\overline{y}})$ back onto the $x$-space through $x_{j} = \sum_{k\in\mathcal{I}_j} \xi^j_{k} \hat{x}_{j}^{k}$, we see that $x_j\in [\hat{x}_j^{\overline{k}},\hat{x}_j^{\overline{k}+1}]$, where $\overline{k}$ is the index of the first positive value of $\overline{\bm{\zeta}}^j$.
In other words, our fixing strategy is equivalent to restricting each $x_j$ to lie between two consecutive breakpoints in $\mathcal{B}_j$, thus, restricting $\bm{x}$ to lie in a single hyperrectangle of the partitioning induced by the breakpoints in $\mathcal{B}$ of the functions' domain.
}

\new{%
Now let $(\widetilde{\bm{\xi}},\widetilde{\bm{y}})\in P$ be a feasible solution for \ref{eq:minlp}. 
We can define an NLP subproblem for which $(\widetilde{\bm{\xi}},\widetilde{\bm{y}})$ is feasible by selecting a binary vector $\widetilde{\bm{\zeta}}^j\in\text{SOS2}(\mathcal{I}_j)$ such that $\widetilde{\bm{\xi}}^j\le\widetilde{\bm{\zeta}}^j$, $j=1,\ldots,n$.
However, $\widetilde{\bm{\zeta}} = \lceil \widetilde{\bm{\xi}}\rceil$ satisfies this condition.
Thus,
\begin{equation}\label{eq:rounding}
(\widetilde{\bm{\xi}},\widetilde{\bm{y}})\in P(\lceil \widetilde{\bm{\xi}}\rceil,\widetilde{\bm{y}}), \, \forall (\widetilde{\bm{\xi}},\widetilde{\bm{y}})\in P
.\end{equation}
Further, let $\overline{P}$ be the feasible region of our MILP relaxation \ref{eq:milp}, and take a feasible solution $(\overline{\bm{\xi}}, \overline{\bm{\lambda}}, \overline{\bm{y}}) \in \overline{P}$.
Then, a consequence of \eqref{eq:rounding} is that
\begin{equation}
(\overline{\bm{\xi}},\overline{\bm{y}}) \in P\iff (\overline{\bm{\xi}},\overline{\bm{y}})\in P(\lceil \overline{\bm{\xi}}\rceil,\overline{\bm{y}})
.\end{equation}
}

\new{%
Recall that our goal is to fix all discrete decisions of our MINLP subproblem.
The rounding procedure in \eqref{eq:rounding} provides a way to fix the discrete decisions in the SOS2 constraints without formulating them as binary variables. 
In other words, we do \emph{not} need additional binary variables (e.g., $\bm{\zeta}$) to formulate the NLP subproblems. 
The fixing operation is illustrated in Figure~\ref{fig:alg-b}, with $P$, the feasible regions of \ref{eq:minlp}, and ${P}(\lceil\, \overline{\bm{\xi}} \,\rceil ,\overline{\bm{y}})$, the feasible region of one of the NLP subproblems.
}

\subsection{Excluding}\label{sec:excluding}

\new{%
Suppose we solve our MILP relaxation \ref{eq:milp} and find an optimal solution $(\overline{\bm{\xi}},\overline{\bm{\lambda}},\overline{\bm{y}}) \in \overline{P}$ with cost $\overline{C}$.
We then use fixing and solve the associated NLP subproblem, finding an optimum $(\widetilde{\bm{\xi}},\widetilde{\bm{y}}) \in P(\lceil\,\overline{\bm{\xi}} \,\rceil,\overline{\bm{y}})\subseteq P$, with cost $C$.
If $\overline{C} = C$, then we have a certificate of optimality for our candidate solution $(\widetilde{\bm{\xi}},\widetilde{\bm{y}})$.
On the other hand, if $\overline{C} < C$, better solutions for \ref{eq:minlp} might only exist in $P \setminus P(\lceil\,\widetilde{\bm{\xi}} \,\rceil,\widetilde{\bm{y}})$, since our candidate is optimal for the NLP subproblem.
Therefore, a tighter dual bound than $\overline{C}$ can be computed by \emph{excluding} $P(\lceil\,\widetilde{\bm{\xi}} \,\rceil,\widetilde{\bm{y}})$ from the feasible region of our MILP relaxation.
}

\new{%
Let $\overline{P}(\lceil\,\overline{\bm{\xi}} \,\rceil,\overline{\bm{y}})$ be defined with respect to $\overline{P}$ akin to how $P(\lceil\,\overline{\bm{\xi}} \,\rceil,\overline{\bm{y}})$ is defined with respect to $P$ in \eqref{eq:nlp-subproblem}, that is, through the imposition of the same fixing constraints.
Then, we have
\begin{equation}
P \setminus P(\lceil\,\overline{\bm{\xi}} \,\rceil,\overline{\bm{y}}) \subseteq \overline{P} \setminus \overline{P}(\lceil\,\overline{\bm{\xi}} \,\rceil,\overline{\bm{y}}) 
.\end{equation}
So instead of excluding $P(\lceil\,\widetilde{\bm{\xi}} \,\rceil,\widetilde{\bm{y}})$ from our MILP relaxation $\overline{\mathcal{P}}$, we can exclude $\overline{P}(\lceil\,\overline{\bm{\xi}} \,\rceil,\overline{\bm{y}})$.
}

\new{%
A common way of excluding a given discrete decision from the solution space is through a no-good cut~\citep{DAMBROSIO2010341,balas_jeroslaw_72}.
Extending such a combinatorial cut to the SOS2 variables results in the strict inequality
\begin{equation}\label{eq:cut}
    \sum_{j=1}^{n} \sum_{k\in \mathcal{I}_j} \left( 1-\left\lceil \,\overline{\xi}^j_{k}\, \right\rceil \right)  \xi^j_{k}   + \sum_{l=1}^{p} \Bigl \{ \overline{y}_l (1-y_l) + (1-\overline{y}_l) y_l\Bigr \} > 0
,\end{equation}
which indeed implies precisely the desired result.
However, imposing a strict inequality may lead to unreachable optima and numerical instabilities.
}

\new{%
We propose a weaker cut based on the extended formulation of the SOS2 constraints using additional binary variables.
Consider the extended formulation of the SOS2 constraints in \eqref{eq:sos2-binaries}.
Then, the no-good cut
\begin{equation}\label{eq:no-good-cut}
    \sum_{j=1}^{n} \sum_{k\in \mathcal{I}_j} \left(1 - \lceil\, \overline{\xi}^j_{k} \,\rceil \right) \zeta^j_{k}  + \sum_{l=1}^{p} \Bigl \{ \overline{y}_l (1-y_l) + (1-\overline{y}_l) y_l\Bigr \} \ge 1
\end{equation}
excludes \emph{at least} the interior of $\overline{P}(\lceil\,\overline{\bm{\xi}} \,\rceil,\overline{\bm{y}})$ from our MILP relaxation.
Note that, due to the SOS2 constraints over each $\bm{\zeta}^j$, we restrict the cut to the indices $k\in\mathcal{I}_j$ for which $\overline{\bm{\zeta}}^j_k = 0$.
Nevertheless, differently from the fixing step, the addition of cut \eqref{eq:no-good-cut} to our MILP relaxation \emph{requires} the extended formulation of the SOS2 constraints, that is, the addition of the $\zeta$ binary variables to \ref{eq:milp}.
}

\new{%
To see the effect of adding cut \eqref{eq:no-good-cut} to the MILP relaxation \ref{eq:milp}, note that its feasible region can be expressed as
\begin{equation}\label{eq:union-subproblems}
    \overline{P} = \bigcup_{\substack{\bm{y}\in\{0,1\}^p,\\ \bm{\zeta}^j\in \text{SOS2}(\mathcal{I}_j), j=1,\ldots,n}} \overline{P}(\bm{\zeta},\bm{y})
,\end{equation}
which is in line with the classic understanding of the feasible region of MILP problems as the union of finitely many polyhedra.
Thus, the no-good cut excludes $\overline{P}(\lceil\,\overline{\bm{\xi}} \,\rceil,\overline{\bm{y}})$ from the union.
Because there might be other polyhedra that intersect $\overline{P}(\lceil\,\overline{\bm{\xi}} \,\rceil,\overline{\bm{y}})$ at its border, the cut is only guaranteed to exclude the interior from the feasible region.
However, iteratively solving the relaxation and adding such cuts is guaranteed to eventually exclude the entire solution space.
}

\new{%
Finally, we note that most formulations for the SOS2 constraints found in the literature~\citep{vielma2010modeling,Silva:EJOR:2014} can be used with only minor modifications to cut \eqref{eq:no-good-cut}.
}

\subsection{The Algorithm}

The proposed relax-fix-and-exclude approach is formalized in Algorithm~\ref{alg:algorithm}.
Our algorithm progressively explores the solution space of the MINLP problem \ref{eq:minlp} guided by the solutions of its MILP relaxation \ref{eq:milp}.
\new{%
An optimal solution to \ref{eq:milp} gives a dual bound, which we use as a stopping criterion.
If the criterion has not been met, we construct the NLP subproblem by \emph{fixing} the binary and SOS2 variables.
An optimal solution to the NLP subproblem yields a candidate solution and, possibly, a tighter primal bound.
Then, the hyperrectangle containing the feasible region of the NLP subproblem is \emph{excluded} from the MILP relaxation.
An optimal solution to the updated MILP relaxation yields a tighter dual bound and a new hyperrectangle to explore.
}
Figure~\ref{fig:alg-steps} illustrates the main steps of the algorithm.
A proof of the algorithm's convergence is established through Theorem~\ref{theo}.

\new{%
The algorithm's guarantees rely on the availability of a global solver for the NLP subproblem.
The fact that the solution found at Line~\ref{line:fix} is a global optimum of the subproblem is what allows us to exclude $\overline{P}(\lceil\, \overline{\bm{\xi}} \,\rceil,\overline{\bm{y}})$ from our MILP relaxation and, thus, guide our exploration of the search space and generate tighter dual bounds.
We note, however, that this is not such a strong assumption, as although the NLP subproblem may be nonconvex, it is smooth and bounded.
In fact, many contemporary solvers (e.g., Gurobi~\cite{gurobi}, BARON~\cite{sahinidisBARONGeneralPurpose1996,zhangSolvingContinuousDiscrete2024}, and SCIP~\cite{BolusaniEtal2024OO,BolusaniEtal2024ZR}) already provide global optimality guarantees for these types of problems, exploiting spatial branch-and-bound and domain reduction techniques~\cite{puranikDomainReductionTechniques2017a}.
}

\begin{theorem}\label{theo}
    Algorithm~\ref{alg:algorithm} terminates in a finite number of steps and returns either (i) $\infty$, if the problem is infeasible, (ii) $-\infty$ if the problem is unbounded, or (iii) an optimal solution to \ref{eq:minlp}.
\end{theorem}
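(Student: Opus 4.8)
The plan is to prove the three claims separately---finite termination, validity of the bounds the algorithm maintains, and correctness of the value returned---writing $C$ for the optimal value of \ref{eq:minlp}, $\hat{C}^{(t)}$ for the incumbent cost after iteration $t$ ($+\infty$ if no candidate has been found yet), and $\overline{C}^{(t)}$ for the dual bound produced by the \emph{Relax} step at iteration $t$ ($+\infty$ when that MILP is infeasible). I would start by noting that the feasible region of \ref{eq:minlp}, and that of \ref{eq:milp} with any finite family of cuts \eqref{eq:no-good-cut} appended, is compact: constraints \eqref{eq:input-constraints} force $\bm{x}\in\text{conv}(\mathcal{B})$, each $\bm{\xi}^j$ lies in a simplex, $\bm{\lambda}\in[0,1]^{|\mathcal{I}|}$, and $\bm{y}\in\{0,1\}^p$. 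Hence every MILP solved in \emph{Relax} and every NLP solved in \emph{Fix} (the latter over $P(\lceil\overline{\bm{\xi}}\rceil,\overline{\bm{y}})$, which is compact with polynomial objective) is either infeasible or attains its optimum, so all steps are well defined, using the assumed global solver for the NLP. The same compactness shows \ref{eq:milp} is never unbounded; since the algorithm returns $-\infty$ only upon detecting an unbounded \emph{Relax} step, outcome (ii) is simply never triggered, and I would remark that it is listed only for completeness (the restriction $\bm{x}\in\text{conv}(\mathcal{B})$ rules out an unbounded \ref{eq:minlp}).

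For the primal side and for (iii), observe first that by \eqref{eq:rounding} and \eqref{eq:nlp-subproblem} every \emph{Fix} step optimizes over $P(\lceil\overline{\bm{\xi}}\rceil,\overline{\bm{y}})\subseteq P$, so each candidate is feasible for \ref{eq:minlp} and $\hat{C}^{(t)}\ge C$. I would then establish the invariant: at every iteration, either $\hat{C}^{(t)}=C$ or $\overline{C}^{(t)}\le C$. Let $(\bm{\xi}^\star,\bm{y}^\star)$ be optimal for \ref{eq:minlp}. By \eqref{eq:union-subproblems} the feasible region of the current MILP is the union of the polyhedra $\overline{P}(\bm{\zeta},\bm{y})$ over configurations $(\bm{\zeta},\bm{y})$ not yet forbidden by a cut, and each such $\overline{P}(\bm{\zeta},\bm{y})$ contains the corresponding $P(\bm{\zeta},\bm{y})$ (Proposition~\ref{prop}, restricted to a fixed hyperrectangle). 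If some configuration $(\bm{\zeta},\bm{y}^\star)$ with $(\bm{\xi}^\star,\bm{y}^\star)\in\overline{P}(\bm{\zeta},\bm{y}^\star)$ is still allowed, then---using that each cut removes at most the interior of its hyperrectangle, together with $P\setminus P(\cdot)\subseteq\overline{P}\setminus\overline{P}(\cdot)$ from Section~\ref{sec:excluding}---$(\bm{\xi}^\star,\bm{y}^\star)$ lies in the current MILP's region and $\overline{C}^{(t)}\le C$. Otherwise every such configuration was forbidden at some earlier iteration $r$ with $(\lceil\overline{\bm{\xi}}^{(r)}\rceil,\overline{\bm{y}}^{(r)})=(\bm{\zeta},\bm{y}^\star)$; the \emph{Fix} step of that iteration then optimized over $P(\bm{\zeta},\bm{y}^\star)\ni(\bm{\xi}^\star,\bm{y}^\star)$, so it returned a candidate of cost $\le C$ and $\hat{C}^{(t)}=C$. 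The algorithm halts once the \emph{Relax} bound reaches the incumbent, $\overline{C}^{(t)}\ge\hat{C}^{(t)}$, or the MILP becomes infeasible ($\overline{C}^{(t)}=+\infty$); combining this with $\hat{C}^{(t)}\ge C$ and the invariant shows that, on termination, the incumbent is optimal whenever one exists. If no candidate was ever found, then (using finiteness below) every configuration eventually gets forbidden while every \emph{Fix} step was infeasible, which forces $P=\varnothing$, so the algorithm correctly returns $+\infty=\infty$.

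For finite termination, I would reason combinatorially through \eqref{eq:union-subproblems}: there are only finitely many configurations $(\bm{\zeta},\bm{y})$ with $\bm{\zeta}^j\in\text{SOS2}(\mathcal{I}_j)\cap\{0,1\}^{|\mathcal{B}_j|}$ and $\bm{y}\in\{0,1\}^p$. Each non-terminating iteration appends one cut \eqref{eq:no-good-cut}, whose left-hand side vanishes precisely at $(\bm{\zeta},\bm{y})=(\lceil\overline{\bm{\xi}}^{(t)}\rceil,\overline{\bm{y}}^{(t)})$ and is $\ge 1$ at every other configuration, so the cut forbids exactly that one configuration in all subsequent MILPs. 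To conclude, one must show that this configuration was still allowed just before the cut is added, so that the set of allowed configurations strictly shrinks and is therefore exhausted after finitely many iterations, after which the MILP is infeasible and the algorithm stops---consistently with parts (ii)--(iii).

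I expect this last point to be the main obstacle. The difficulty is that the solution returned by \emph{Relax} carries its own binary vector $\bm{\zeta}$, and when $\overline{\bm{\xi}}$ lands exactly on a breakpoint in some coordinate this $\bm{\zeta}$ need not equal the rounding $\lceil\overline{\bm{\xi}}\rceil$ used to build the cut; one has to exploit the SOS2 structure---each $\overline{\bm{\xi}}^j$ and each $\bm{\zeta}^j$ is supported on at most two consecutive indices, with $\mathrm{supp}(\overline{\bm{\xi}}^j)\subseteq\mathrm{supp}(\bm{\zeta}^j)$---together with the cuts already present, to verify that the configuration extracted at iteration $t$ has not been forbidden before (equivalently, that solving the NLP over $P(\lceil\overline{\bm{\xi}}^{(t)}\rceil,\overline{\bm{y}}^{(t)})$ is genuine progress rather than a re-exploration of an already-excluded hyperrectangle or one of its faces). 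This is the one place where the precise form of the no-good cut \eqref{eq:no-good-cut} and the rounding rule \eqref{eq:rounding} must be played off against each other, and it is where I would concentrate the technical work.
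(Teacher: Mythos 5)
Your overall architecture matches the paper's: the same exact-cover decomposition of $P$ into the sets $P(\bm{\zeta},\bm{y})$, finite termination via the exclusion cuts, and optimality via the global NLP solver on the subproblem that contains the optimum versus the dual bound from \ref{eq:milp}. The differences are organizational and mostly to your credit: you phrase correctness as an invariant (either the incumbent equals $C$ or the current dual bound is $\le C$) rather than the paper's contradiction at termination, you justify well-definedness and the impossibility of outcome (ii) via compactness of $\text{conv}(\mathcal{B})$ (the paper is silent on this), and you correctly distinguish the solver's binary vector $\bm{\zeta}$ from the rounding $\lceil\overline{\bm{\xi}}\rceil$, which the paper conflates.

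The one step you leave open — that the configuration $(\lceil\overline{\bm{\xi}}\rceil,\overline{\bm{y}})$ excluded at each iteration was not already forbidden, so that the set of allowed configurations strictly shrinks — is a genuine gap in your write-up, and you are right that it is the crux: the paper's own proof asserts that ``the same subproblem will never be visited twice'' via the implication $(\bm{\xi},\bm{\lambda},\bm{y})\in\overline{P}\setminus\overline{P}(\lceil\overline{\bm{\xi}}\rceil,\overline{\bm{y}})\implies\lceil\bm{\xi}\rceil\neq\lceil\overline{\bm{\xi}}\rceil$, but this does not close the case you worry about, because cut \eqref{eq:no-good-cut} removes only the interior of $\overline{P}(\lceil\overline{\bm{\xi}}\rceil,\overline{\bm{y}})$ and the next MILP optimum may be a boundary point of it. Concretely, if some $\overline{\bm{\xi}}^j$ is supported on a single index $k$ (i.e., $\overline{x}_j$ lands exactly on a breakpoint), the very same point $(\overline{\bm{\xi}},\overline{\bm{\lambda}},\overline{\bm{y}})$ remains feasible after the cut by switching $\bm{\zeta}^j$ to the indicator of $\{k-1,k\}$ or $\{k,k+1\}$; for $n\ge 3$ the associated $\overline{\bm{\lambda}}$ need not satisfy \eqref{eq:interpolation-bilinears} on the remaining free coordinates, so the NLP need not close the gap, the same rounding and the same (now redundant) cut are generated, and the loop can in principle repeat forever. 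So the obstacle is not merely technical bookkeeping — as literally stated the argument fails on this degenerate case, for both you and the paper. The clean repair is the one your last paragraph gestures at: build both the fixing and the cut from the solver's own SOS2 indicator $\bm{\zeta}$ (or from any full, two-consecutive-index indicator dominating $\lceil\overline{\bm{\xi}}\rceil$ that satisfies all cuts added so far — one exists because the MILP solution does). That configuration is allowed by construction, its cut forbids it, the count of allowed configurations strictly decreases, and the rest of your argument (the NLP subproblem only grows under this change, so feasibility of the fixed problem and the incumbent logic are unaffected) goes through unchanged.
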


\begin{proof}
\new{%
First, we note that the feasible region of the NLP subproblems define an exact cover for the feasible region of \ref{eq:minlp}.
More precisely, by the definition of $P(\bm{\zeta},\bm{y})$ in \eqref{eq:nlp-subproblem}, we have, as already presented in Section \ref{sec:excluding},
\begin{equation}
P = \bigcup_{\substack{\bm{y}\in\{0,1\}^p,\\ \bm{\zeta}^j\in \text{SOS2}(\mathcal{I}_j), j=1,\ldots,n}} P(\bm{\zeta},\bm{y})
,\end{equation}
which is a direct consequence of the partitioning induced in the $x$-space from the breakpoints $\mathcal{B}_1,\ldots,\mathcal{B}_n$.
To see this, recall that fixing $\bm{\zeta}$ is equivalent to restraining $\bm{x}$ to lie in a particular hyperrectangle of $\R^n$ defined by consecutive breakpoints (see Section \ref{sec:fixing}).
Similarly, the family of sets $\overline{P}(\bm{\zeta},\bm{y})$ also defines an exact cover for $\overline{P}$.
}

\new{%
The finite termination of the algorithm is a direct consequence of the exclude step at Line~\ref{line:cut}, which guarantees that the same subproblem will never be visited twice.
Putting it into terms, we have that, for any $(\overline{\bm{\xi}},\overline{\bm{\lambda}},\overline{\bm{y}})\in\overline{P}$,
\begin{equation}
(\bm{\xi},\bm{\lambda},\bm{y}) \in \overline{P} \setminus \overline{P}(\lceil\,\overline{\bm{\xi}}\,\rceil,\overline{\bm{y}}) \implies \lceil\,\bm{\xi}\,\rceil \neq \lceil\,\overline{\bm{\xi}}\,\rceil
.\end{equation}
Thus, once the multilinear interpolation implies that our feasible region is bounded, we will eventually run out of subproblems (or hyperrectangles) to explore, resulting in $\overline{P}=\emptyset$ and inducing $\overline{C}=\infty$ at Line~\ref{line:milpsolver}, which meets the termination condition.
}

\new{%
Consider now the case in which the algorithm terminates and no candidate solution is found, i.e., the algorithm returns $C = \infty$.
This implies that the termination condition of the while loop was met through $\overline{C}=\infty$.
But for any $(\overline{\bm{\xi}},\overline{\bm{\lambda}},\overline{\bm{y}})\in\overline{P}$, we have $P(\overline{\bm{\xi}},\overline{\bm{y}}) \subseteq \text{Proj}_{\bm{\xi}\bm{y}}(\overline{P}(\overline{\bm{\xi}},\overline{\bm{y}}))$ and
\begin{equation}
P \setminus P(\overline{\bm{\xi}},\overline{\bm{y}}) \subseteq \text{Proj}_{\bm{\xi}\bm{y}} (\overline{P}\setminus\overline{P}(\overline{\bm{\xi}},\overline{\bm{y}}))
,\end{equation}
where $\text{Proj}_{\bm{\xi}\bm{y}}(\cdot)$ denotes the projection in the $(\bm{\xi}$,$\bm{y})$-space.
So, once no feasible solution was found during the iterations of the while loop, we have a guarantee that $P=\varnothing$.
}

\new{%
Suppose now the algorithm terminates with a candidate solution that is not optimal.
Thus, there must exist a solution $(\bm{\xi}',\bm{y}')\in P$ with an associated cost $C' < C$.
However, once we assume the NLP solver used in the fix step (at Line \ref{line:fix}) has global optimality guarantees, $(\bm{\xi}',\bm{y}')$ must not be feasible for any of the NLP subproblems visited.
In other words, it must be true that $(\bm{\xi}',\bm{y}') \in \text{Proj}_{\bm{\xi}\bm{y}}(\overline{P}(\overline{\bm{\xi}},\overline{\bm{y}}))$, where $\overline{P}$ is the feasible region of the MILP at the termination, i.e., after all exclusions.
But then we reach a contradiction, as the termination condition implies $C' \ge \overline{C} \ge C$.
Thus, the candidate solution returned by the algorithm must be optimal.
}
\end{proof}

\begin{figure}[H]
    \centering
    \ref{labels}  
    \\
    \vspace{0.2cm}
    \begin{subfigure}[t]{0.49\textwidth}
        \centering
        \begin{tikzpicture}
    	\tikzstyle{barP} = [draw,blue,fill=blue!20,closed=true]
    	\tikzstyle{tildeP} = [draw,orange,use Hobby shortcut,closed=true,fill=orange!30]
    
    	\path [style=barP] (3,0.25) -- (4,0.3) -- (5,1.3) -- (5.3,2) -- (5.4,3) -- (5,3.8) -- (4.6,3.9) -- (4.1,3.9) -- (3.6,3.5) -- (3,2.5) -- (2.75,2) -- (2.5,1.3) -- (2.45,0.65) -- (3,0.25);
    	\path [style=barP] (2,5.85) -- (3,5.95) -- (3.9,5.25) -- (3.9,4.75) -- (3.7,4.35) -- (3,4.05) -- (2,4.15) -- (1.3,4.25) -- (0.9,4.75) -- (0.9,5.25) -- (1.5,5.75) -- (2,5.85);
    
    	\path[style=tildeP] (2.5,1) .. (4,.5) .. (5.2,2) .. (4,3.7) .. (3.8,3.4) .. (3.5,2);
    	\path[style=tildeP] (2,5.75) .. (3.8,4.75) .. (2,4.25) .. (1,4.75);
    
        \begin{axis}[
        	anchor=origin,
        	at={(0pt,0pt)},
        	disabledatascaling,
        	x=1cm,y=1cm,
        	hide axis,
        	xmin=0,
        	xmax=6,
        	ymin=0,
        	ymax=6,
            legend columns=-1,
            legend style={
                draw=none,
                column sep=0.5cm,
                /tikz/every odd column/.append style={column sep=0.1cm},
            },
            legend to name=labels,
        ]
            \addlegendimage{draw,blue,fill=blue!20,closed=true,area legend}
            \addlegendentry{$\overline{P}$}
            \addlegendimage{draw,orange,use Hobby shortcut,closed=true,fill=orange!30,area legend}
            \addlegendentry{$P$}
            \addlegendimage{draw,orange,use Hobby shortcut,closed=true,fill=orange!60,color=orange,area legend}
            \addlegendentry{$P(\lceil\, \overline{\bm{\xi}} \,\rceil,\overline{\bm{y}})$}
            
            \addlegendimage{legend image with text=$\overline{\star}$}
            \addlegendentry{$(\overline{\bm{\xi}},\overline{\bm{\lambda}},\overline{\bm{y}})$}
            \addlegendimage{legend image with text=$\widetilde{\star}$}
            \addlegendentry{$(\widetilde{\bm{\xi}},\widetilde{\bm{y}})$}
    
    	\foreach \x in {2,4} {
    	    \addplot [black,dashed] coordinates {
    		(\x,0) (\x,8)
    
    		(0,\x) (8,\x)
    	    };
    	}
        \end{axis}
    
    
        \node (barPstar) at (5.4,3.05) {$\overline{\star}$};
        \end{tikzpicture}
        \caption{Before the first iteration, a solution $\overline{\star}$ for the MILP relaxation \ref{eq:milp} is computed (Line~\ref{line:first-milpsolve})}\label{fig:alg-a}
    \end{subfigure}\hfill
    \begin{subfigure}[t]{0.49\textwidth}
        \centering
        \begin{tikzpicture}
    	\tikzstyle{barP} = [draw,blue,fill=blue!20,closed=true]
    	\tikzstyle{tildeP} = [draw,orange,use Hobby shortcut,closed=true,fill=orange!30]
    
    	\path [style=barP] (3,0.25) -- (4,0.3) -- (5,1.3) -- (5.3,2) -- (5.4,3) -- (5,3.8) -- (4.6,3.9) -- (4.1,3.9) -- (3.6,3.5) -- (3,2.5) -- (2.75,2) -- (2.5,1.3) -- (2.45,0.65) -- (3,0.25);
    	\path [style=barP] (2,5.85) -- (3,5.95) -- (3.9,5.25) -- (3.9,4.75) -- (3.7,4.35) -- (3,4.05) -- (2,4.15) -- (1.3,4.25) -- (0.9,4.75) -- (0.9,5.25) -- (1.5,5.75) -- (2,5.85);
    
    	\path[style=tildeP] (2.5,1) .. (4,.5) .. (5.2,2) .. (4,3.7) .. (3.8,3.4) .. (3.5,2);
    	\path[style=tildeP] (2,5.75) .. (3.8,4.75) .. (2,4.25) .. (1,4.75);
    
        \begin{axis}[
    	anchor=origin,
    	at={(0pt,0pt)},
    	disabledatascaling,
    	x=1cm,y=1cm,
    	hide axis,
    	xmin=0,
    	xmax=6,
    	ymin=0,
    	ymax=6,
        ]
    	\foreach \x in {2,4} {
    	    \addplot [black,dashed] coordinates {
    		(\x,0) (\x,8)
    
    		(0,\x) (8,\x)
    	    };
    	}
        \end{axis}
    
        \begin{scope} 
    	\path [clip] (4,2) -- (6,2) -- (6,4) -- (4,4) -- (4,2);
    	\path [style=tildeP,color=orange,fill=orange!60] (2.5,1) .. (4,.5) .. (5.2,2) .. (4,3.7) .. (3.8,3.4) .. (3.5,2);
        \end{scope}
    
        \node (barPstar) at (5.4,3.05) {$\overline{\star}$};
    
        \node (tildePstar) at (4.3,3.4) {$\widetilde{\star}$};
    
        \end{tikzpicture}
        \caption{In the loop, at Line~\ref{line:fix}, the NLP subproblem over $P(\lceil\, \overline{\bm{\xi}} \,\rceil,\overline{\bm{y}})$ is constructed by \emph{fixing} the integer variables of \ref{eq:minlp} with the values of \ref{eq:milp}. The subproblem is solved to optimality ($\widetilde{\bm{x}}$), which we note results in a feasible solution for \ref{eq:minlp}.}\label{fig:alg-b}
    \end{subfigure}
    \begin{subfigure}[t]{0.49\textwidth}
        \centering
        \begin{tikzpicture}
    	\tikzstyle{barP} = [draw,blue,fill=blue!20,closed=true]
    	\tikzstyle{tildeP} = [draw,orange,use Hobby shortcut,closed=true,fill=orange!30]
    
    	\path [style=barP] (3,0.25) -- (4,0.3) -- (5,1.3) -- (5.3,2) -- (5.4,3) -- (5,3.8) -- (4.6,3.9) -- (4.1,3.9) -- (3.6,3.5) -- (3,2.5) -- (2.75,2) -- (2.5,1.3) -- (2.45,0.65) -- (3,0.25);
    	\path [style=barP] (2,5.85) -- (3,5.95) -- (3.9,5.25) -- (3.9,4.75) -- (3.7,4.35) -- (3,4.05) -- (2,4.15) -- (1.3,4.25) -- (0.9,4.75) -- (0.9,5.25) -- (1.5,5.75) -- (2,5.85);
    
    	\path[style=tildeP] (2.5,1) .. (4,.5) .. (5.2,2) .. (4,3.7) .. (3.8,3.4) .. (3.5,2);
    	\path[style=tildeP] (2,5.75) .. (3.8,4.75) .. (2,4.25) .. (1,4.75);
    
        \begin{axis}[
        	anchor=origin,
        	at={(0pt,0pt)},
        	disabledatascaling,
        	x=1cm,y=1cm,
        	hide axis,
        	xmin=0,
        	xmax=6,
        	ymin=0,
        	ymax=6,
        ]
    	\foreach \x in {2,4} {
    	    \addplot [black,dashed] coordinates {
    		(\x,0) (\x,8)
    
    		(0,\x) (8,\x)
    	    };
    	}
        \end{axis}
    
        \begin{scope} 
    	\path [clip] (4,2) -- (6,2) -- (6,4) -- (4,4) -- (4,2);
    	\path [draw,white,fill=white,opacity=100] (3,0.25) -- (4,0.3) -- (5,1.3) -- (5.3,2) -- (5.4,3) -- (5,3.8) -- (4.6,3.9) -- (4.1,3.9) -- (3.6,3.5) -- (3,2.5) -- (2.75,2) -- (2.5,1.3) -- (2.45,0.65) -- (3,0.25);
    	\path [draw,blue,dashed] (3,0.25) -- (4,0.3) -- (5,1.3) -- (5.3,2) -- (5.4,3) -- (5,3.8) -- (4.6,3.9) -- (4.1,3.9) -- (3.6,3.5) -- (3,2.5) -- (2.75,2) -- (2.5,1.3) -- (2.45,0.65) -- (3,0.25);
    	\path [style=tildeP,color=orange,fill=orange!60] (2.5,1) .. (4,.5) .. (5.2,2) .. (4,3.7) .. (3.8,3.4) .. (3.5,2);
        \end{scope}
    
        \node (barPstar) at (5.4,3.05) {\color{red}$\overline{\star}$};
        \node (tildePstar) at (4.3,3.4) {$\widetilde{\star}$};
    
        \end{tikzpicture}
        \caption{The exclude step (Line~\ref{line:cut}) adds a cut to \ref{eq:milp} that removes the interior of the hyperrectangle containing $\overline{P}(\lceil\, \overline{\bm{\xi}}^\star \,\rceil,\overline{\bm{y}}^\star)$.
        Note that \ref{eq:milp} is now an MILP relaxation over the ``unexplored'' region of \ref{eq:minlp}.
        }\label{fig:alg-c}
    \end{subfigure}\hfill
    \begin{subfigure}[t]{0.49\textwidth}
        \centering
        \begin{tikzpicture}
    	\tikzstyle{barP} = [draw,blue,fill=blue!20,closed=true]
    	\tikzstyle{tildeP} = [draw,orange,use Hobby shortcut,closed=true,fill=orange!30]
    
    	\path [style=barP] (3,0.25) -- (4,0.3) -- (5,1.3) -- (5.3,2) -- (5.4,3) -- (5,3.8) -- (4.6,3.9) -- (4.1,3.9) -- (3.6,3.5) -- (3,2.5) -- (2.75,2) -- (2.5,1.3) -- (2.45,0.65) -- (3,0.25);
    	\path [style=barP] (2,5.85) -- (3,5.95) -- (3.9,5.25) -- (3.9,4.75) -- (3.7,4.35) -- (3,4.05) -- (2,4.15) -- (1.3,4.25) -- (0.9,4.75) -- (0.9,5.25) -- (1.5,5.75) -- (2,5.85);
    
    	\path[style=tildeP] (2.5,1) .. (4,.5) .. (5.2,2) .. (4,3.7) .. (3.8,3.4) .. (3.5,2);
    	\path[style=tildeP] (2,5.75) .. (3.8,4.75) .. (2,4.25) .. (1,4.75);
    
        \begin{axis}[
    	anchor=origin,
    	at={(0pt,0pt)},
    	disabledatascaling,
    	x=1cm,y=1cm,
    	hide axis,
    	xmin=0,
    	xmax=6,
    	ymin=0,
    	ymax=6,
        ]
    	\foreach \x in {2,4} {
    	    \addplot [black,dashed] coordinates {
    		(\x,0) (\x,8)
    
    		(0,\x) (8,\x)
    	    };
    	}
        \end{axis}
    
        \begin{scope} 
    	\path [clip] (4,2) -- (6,2) -- (6,4) -- (4,4) -- (4,2);
    	\path [draw,white,fill=white,opacity=100] (3,0.25) -- (4,0.3) -- (5,1.3) -- (5.3,2) -- (5.4,3) -- (5,3.8) -- (4.6,3.9) -- (4.1,3.9) -- (3.6,3.5) -- (3,2.5) -- (2.75,2) -- (2.5,1.3) -- (2.45,0.65) -- (3,0.25);
    	\path [draw,blue,dashed] (3,0.25) -- (4,0.3) -- (5,1.3) -- (5.3,2) -- (5.4,3) -- (5,3.8) -- (4.6,3.9) -- (4.1,3.9) -- (3.6,3.5) -- (3,2.5) -- (2.75,2) -- (2.5,1.3) -- (2.45,0.65) -- (3,0.25);
    	\path [style=tildeP,color=orange,fill=orange!60] (2.5,1) .. (4,.5) .. (5.2,2) .. (4,3.7) .. (3.8,3.4) .. (3.5,2);
        \end{scope}
    
        \node (barPstar) at (3.9,5.25) {$\overline{\star}$};
    
        \node (tildePstar) at (4.3,3.4) {$\widetilde{\star}$};
    
        \end{tikzpicture}
        \caption{Finally, a new optimal is computed for \ref{eq:milp} which is a lower bound of the unexplored region, and, thus, induces a stopping criteria.}\label{fig:alg-d}
    \end{subfigure}
    \caption{%
    Illustration of fixing and excluding in Algorithm~\ref{alg:algorithm}.
    The orange and blue regions represent, respectively, the feasible regions of \ref{eq:minlp} and \ref{eq:milp}, and the dashed lines represent the partitioning induced by the look-up table. Points $(\overline{\bm{x}},\overline{\bm{y}},\overline{\bm{\xi}},\overline{\bm{\lambda}})$ and $(\widetilde{\bm{x}},\widetilde{\bm{y}},\widetilde{\bm{\xi}},\widetilde{\bm{\lambda}})$ represent optimal solutions for \ref{eq:milp} and the subproblem $\mathcal{P}(\lceil\, \widetilde{\bm{\xi}} \,\rceil,\widetilde{\bm{y}})$.
    }
    \label{fig:alg-steps}
\end{figure}

\begin{algorithm}
\caption{\new{The Relax-Fix-and-Exclude algorithm. We assume the availability of global solvers both for MILP and NLP that return $-\infty$ as cost if the problem is unbounded and $\infty$ if the problem is infeasible.}}\label{alg:algorithm}
\begin{algorithmic}[1]
    \Require{Breakpoints $\mathcal{B}_j=\{\hat{x}_j^{k}\}_{k\in\mathcal{I}_j}$ for $j=1,\dots,n$, and look-up tables $\{(\hat{\bm{x}},f_i(\hat{\bm{x}})) : \hat{\bm{x}} \in \mathcal{B}_1\times\cdots\times \mathcal{B}_n\}$ for $i=0,\dots,n$.}
    
    \State $C \gets  \infty$ (infeasible)
    
    \State Solve the MILP relaxation: find an optimal solution $(\overline{\bm{\xi}},\overline{\bm{\lambda}},\overline{\bm{y}})\in \overline{P}$ with cost $\overline{C}$ \label{line:first-milpsolve}
    
    \While{$\overline{C} < C$}
        \State (Fix) Solve the NLP subproblem: find an optimal solution $(\widetilde{\bm{\xi}},\widetilde{\bm{y}})\in P(\lceil \,\overline{\bm{\xi}}\, \rceil ,\overline{\bm{y}})$ with cost $\widetilde{C}$ \label{line:fix}

    	\If{$\widetilde{C}< C$} \label{line:updateCtilde:begin}
    	    \State $C \gets \widetilde{C}$ 
    	    \State $(\bm{\xi}^\star, \bm{y}^{\star}) \gets (\widetilde{\bm{\xi}},\widetilde{\bm{y}})$ 
        \EndIf \label{line:updateCtilde:end}
    
    	\State (Exclude) Add cut \eqref{eq:no-good-cut} to $\overline{P}$ using $\lceil \,\overline{\bm{\xi}}\, \rceil$ and $\overline{\bm{y}}$   \label{line:cut}
        
        \State Solve the restricted MILP relaxation: update the optimal solution $(\overline{\bm{\xi}},\overline{\bm{\lambda}},\overline{\bm{y}})\in \overline{P}$ and the cost $\overline{C}$ \label{line:milpsolver}
    \EndWhile

    \If{$|C| < \infty$}{
    	\Return{$({\bm{\xi}}^\star, {\bm{y}}^{\star})$}
    }
    \Else{
    	\Return{$C$}
    }
    \EndIf
\end{algorithmic}
\end{algorithm}

In Algorithm \ref{alg:algorithm}, the relax-fix-and-exclude approach for the MINLP problem has three potential outcomes: 
\begin{enumerate}
\item  \textit{Infeasibility} (\(\infty\)): \new{%
The algorithm outputs \(\infty\) if either the MILP relaxation is infeasible\footnote{\new{Assuming that $\overline{C} < C$ evaluates to false when both variables take $\infty$.}}, or all NLP subproblems are infeasible.
Because the union of the feasible regions of the NLP subproblems are precisely the feasible region of \ref{eq:minlp},  the algorithm returns \(\infty\) if, and only if, the original MINLP is infeasible.
}

\item  \textit{Optimal Solution}: An optimal solution is reached when the lower bound $\overline{C}$ \new{from the MILP relaxation} converges to \new{(or surpasses) the cost} $C$ of the best-known feasible solution (the incumbent). This convergence implies that no further improvement is possible, confirming the incumbent as the global optimum of the problem.

\item  \textit{Unbounded Solution} (\(-\infty\)): If, at any point, the NLP solution within a fixed region reveals an unbounded solution (where \(C = -\infty\)), the algorithm returns \(-\infty\), signifying that the original MINLP problem is unbounded. This outcome occurs when certain regions yield solutions that are unbounded in the objective due to either the problem structure or insufficient constraints.
\end{enumerate}


\section{Oil Production Optimization} \label{sec:oil-production-optimization}

Oil Production Optimization (OPO) \citep{foss_jenson} is essential for maximizing economic gains in the petroleum industry while adhering to operational and regulatory constraints.
  Short-term production optimization focuses on control decisions with a planning horizon from hours to days, manipulating the production system to achieve an optimal steady-state operation \citep{Muller2022}.
Such optimization problems involve the complexities of fluid flow physics and the subtleties of various artificial-lifting technologies (such as gas-lift systems and electrical submersible pumps) while addressing the discrete decisions of having multiple sources \citep{CODAS2012222}.

\subsection{Problem Statement}

The problem's objective rests on maximizing the oil production of a subsea system consisting of sources (oil wells), connections (flowlines, pipelines, and manifolds), and a single sink (offshore platform).
Sources and sink establish pressure references: reservoir pressure, in the case of wells, and separator pressure at the platform on the topside.
Choke valves and lift-gas injection\footnote{In this work, lift-gas injection is the only artificial lifting method modeled.} increase and decrease (resp.) the pressure at specific network points.
The operators aim to maximize oil production by determining which wells will be open and which will be closed, the optimal opening of the valves, and the rate of lift-gas injected into the oil wells, while honoring flow assurance constraints and maintaining network balance~\citep{CODAS2012222}.
We refer the reader to \citeauthor{Muller2022}\cite{Muller2022} for a recent work on short-term production optimization of complex production systems, considering flow assurance constraints, flow routing decisions, and multiple artificial lifting techniques, among other features.

Figure \ref{fig:esquema_plataforma} depicts an offshore platform that produces  from satellite wells (connected directly to the platform) and wells connected to a subsea manifold, from which the mixed flow is transferred by risers to the platform. At the topside, the production is processed by separators that split the flow into streams of i) water treated before being reinjected or discarded, ii) oil transferred to onshore terminals by tankers, and iii) gas which is exported by subsea pipelines, used for artificial lifting or consumed by turbo generators.

\begin{figure}[!htb]
    \centering
    \includegraphics[width=\textwidth]{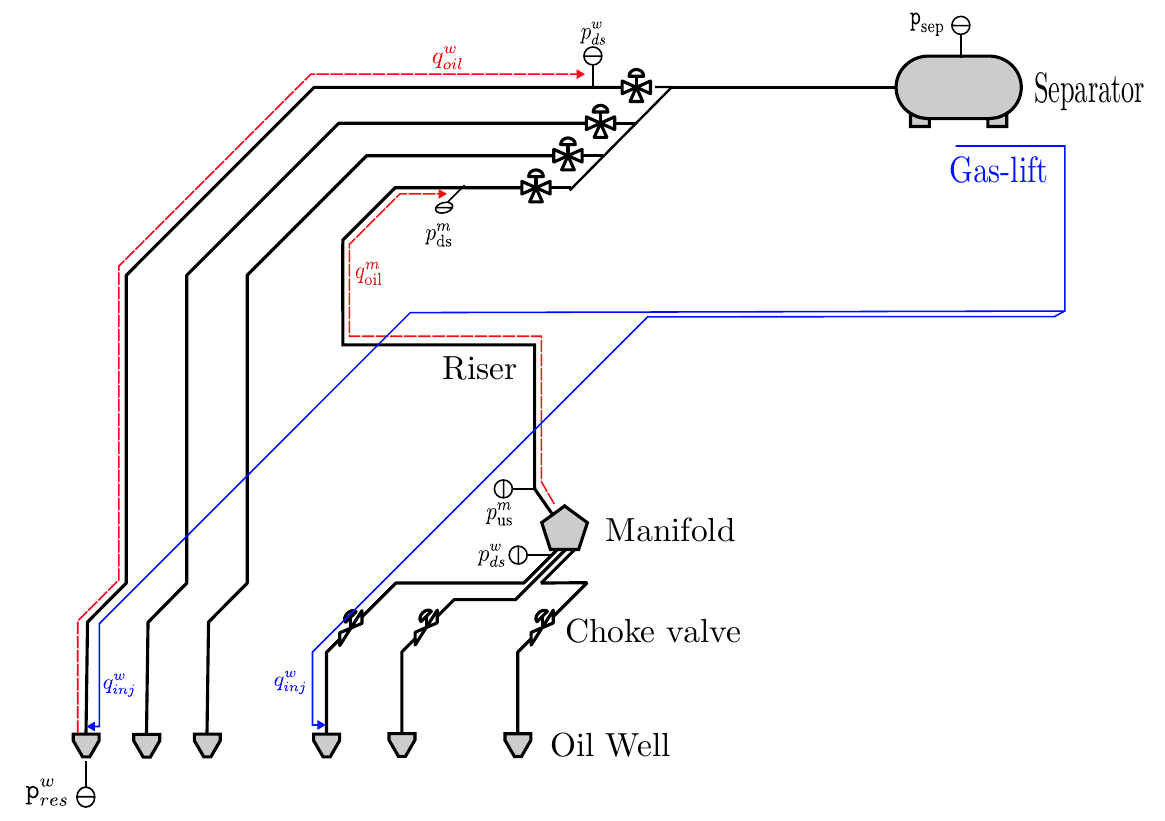}
    \caption{Platform diagram based on \citeauthor{Muller2022}\citep{Muller2022}. The platform coordinates the production from satellite wells, which flow directly to the platform, and wells connected to a subsea manifold, which collects the production and then sends it to the platform through riser pipelines. At the platform, the mixed production is processed by separators that split the production flow into water, oil, and gas streams.}
    \label{fig:esquema_plataforma}
\end{figure}

\subsection{Vertical Lift Performance (VLP) curve}

The Vertical Lift Performance (VLP) curve is a fundamental tool used to model the pressure drop across various components in an oil production system.
It describes the relationship between the flow rate of fluids and the pressure required to transport them through these components.
The VLP curve encapsulates the complex interactions of multiphase fluid flow, accounting for factors like fluid properties, gas-oil ratio (GOR), water cut (WCT), component geometry, and operational conditions.
For this reason, it is often impractical to model VLP curves analytically in optimization problems; instead, high-fidelity simulations are employed to approximate the complex nonlinear relationships inherent in multiphase fluid flow within the network.
In other words, practical considerations deem the VLP curve a black-box function accessible only through expensive, single-point evaluations.

In this work, we employ the VLP curve to compute the \emph{upstream} pressure of a component as a function of its downstream pressure, the liquid flow rate, the mixture components (shares of water, oil, and gas), and the rate of lift-gas injected.
For a component $c$ of the system, we formulate the VLP curve as a function:
\begin{equation}
\begin{aligned}
    f_{\text{us}}^c: \R_+ \times \R_+ \times \R_+ \times \R_+ \times \left[ 0,1 \right]  &\longrightarrow \R_+ \\
    q_{\text{liq}}^{c}, p_{\text{ds}}^{c}, \text{iglr}^{c}, \text{gor}^{c}, \text{wct}^{c} &\longmapsto p_{\text{us}}^c
,\end{aligned}
\end{equation}
in which $q_{\text{liq}}^{c}$ is the liquid flow rate at that component, $p_{\text{ds}}^{c}$ is its downstream pressure, $\text{iglr}^{c}$ is the injected-gas to liquid ratio, and $\text{gor}^{c}$ and $\text{wct}^{c}$ are, respectively, the gas-oil ratio and the water-cut ratio of the flowing mixture.
Note that, for every component, $p_{\text{ds}}^{c}$ and $\text{iglr}^{c}$ are directly controlled.
The former is done by opening and closing choke valves, and the latter by lift-gas injection.

VLP curves are computed for each oil well in the system, taking as reference the reservoir pressure ($p_{\text{us}}^{c}$) and the pressure at the choke valve just upstream of the nearest connection ($p_{\text{ds}}^c$).
Therefore, for satellite wells, which are connected directly to the production platform, $p_{\text{ds}}^{c}$ is the pressure at the separator inlet to which the well is connected, whereas for manifold wells, which are connected to subsea manifolds, $p_{\text{ds}}^{c}$ is the pressure at its manifold valve.
VLP is also computed for the manifolds to account for the pressure drop along the riser pipeline, such that the upstream pressure is the pressure after the wells' valves, and the downstream pressure is the pressure at the valve connected to the separator.

\subsection{Network Modeling}

Let $\mathcal{W}$ be the set of all oil wells and $\mathcal{M}$, the set of all subsea manifolds.
  Further specializing, let $\mathcal{W}_{\text{sat}}$ be the set of all satellite wells (i.e., those that are connected directly to the production platform) and $\mathcal{W}_{\text{man}}= \bigcup_{m \in  \mathcal{M}} \mathcal{W}_{\text{man}}^{m}$ be the set of manifold wells, where $\mathcal{W}_{\text{man}}^{m}$ are the wells connected to manifold $m$.
\new{Then, the oil production optimization problem is formulated as}
\begin{subequations}\label{eq:oil-model}
\begin{align}
    \max \quad & \sum_{w \in \mathcal{W}} q_{\text{oil}}^{w} & \nonumber \\
	\textrm{s.t.} \quad & \left . \begin{aligned} 
        & \text{wct}^{c} = q_{\text{water}}^{c} / q_{\text{liq}}^{c} \\
	& q_{\text{oil}}^{c} = q_{\text{liq}}^{c} - q_{\text{water}}^{c} \\
	& \text{gor}^{c} = q_{\text{gas}}^{c} / q_{\text{oil}}^{c} \\
	& \text{iglr}^{c} = q_{\text{inj}}^{c} / q_{\text{liq}}^{c}
\end{aligned} \right \} & \forall c \in \mathcal{W}\cup \mathcal{M} \label{eq:oil-model-c1} \\
    \nonumber\\
  &  \left .\begin{aligned}
	& q_{\text{oil}}^{m} = \sum_{w \in \mathcal{W}^{m}_{\text{man}}} q_{\text{oil}}^{w} \\
        & q_{\text{water}}^{m} = \sum_{w \in \mathcal{W}^{m}_{\text{man}}} q_{\text{water}}^{w} \\
        & q_{\text{gas}}^{m} = \sum_{w \in \mathcal{W}^{m}_{\text{man}}} q_{\text{gas}}^{w} \\
        & q_{\text{inj}}^{m} = \sum_{w \in \mathcal{W}^{m}_{\text{man}}} q_{\text{inj}}^{w} \\
    \end{aligned} \right \} & \forall m \in \mathcal{M} \label{eq:oil-model-c2} \\
    \nonumber\\
    & p_{\text{us}}^{c} = f_{\text{us}}^{c}(q_{\text{liq}}^{c}, p_{\text{ds}}^{c}, \text{iglr}^{c}, \text{gor}^{c},\text{wct}^{c}) & \forall c \in \mathcal{W}\cup \mathcal{M} \label{eq:oil-model-c4} \\
    \nonumber\\
    & q_{\text{liq}}^{w} = \text{PI}^{w} \cdot  ( \overline{p}_{\text{res}}^{w} \cdot y^{w} - p_{\text{us}}^{w}) & \forall w \in \mathcal{W} \label{eq:oil-model-c5} \\
    \nonumber\\
   & y^m \le \sum_{w\in \mathcal{W}_{\text{man}}^{m}} y^w & \forall m\in \mathcal{M} \label{eq:oil-model-c12a} \\
   & y^w \leq  y^m & \forall w\in \mathcal{W}_{\text{man}}^{m} ,\,\forall m\in \mathcal{M} \label{eq:oil-model-c12b} \\
    \nonumber\\
	& \sum_{w\in \mathcal{W}} q_{\text{liq}}^{w} \le \overline{q}_{\text{liq}}^{\text{max}} & \label{eq:oil-model-c6a} \\
	& \sum_{w\in \mathcal{W}} q_{\text{inj}}^{w} \le \overline{q}_{\text{inj}}^{\text{max}} & \label{eq:oil-model-c6b} \\
    \nonumber\\
    & \Delta p_{\text{choke}}^{w} = p_{\text{ds}}^{w} - \overline{p}_{\text{sep}} & \forall w\in \mathcal{W}_{\text{sat}} \label{eq:oil-model-c7} \\
    & \Delta p_{\text{choke}}^{w} = p_{\text{ds}}^{w} - p_{\text{us}}^{m} & \forall w\in \mathcal{W}_{\text{man}}^{m},\,\forall m\in \mathcal{M} \label{eq:oil-model-c8} \\
    & \Delta p_{\text{choke}}^{m} = p_{\text{ds}}^{m} - \overline{p}_{\text{sep}} & \forall m\in \mathcal{M} \label{eq:oil-model-c9} \\
    \nonumber\\
  &  \left . \begin{aligned}
	& p_{\text{us}}^{c} - p_{\text{ds}}^{c} \ge -M \cdot (1-y^{c}) \\
	& y^{c} \in \left\{ 0,1 \right\} 
    \end{aligned} \right \} & \forall c \in \mathcal{W}\cup \mathcal{M} \label{eq:oil-model-c10}  \\
    \nonumber\\
    & \left . \begin{aligned}
    & t_{\text{GL}}^w\cdot \overline{q}_{\text{inj,min}}^w \le q_{\text{inj}}^{w} \le t_{\text{GL}}^w\cdot \overline{q}_{\text{inj,max}}^w  \\
    & t_{\text{GL}}^w \in \{0,1\} 
    \end{aligned} \right \} & \forall w \in \mathcal{W} \label{eq:oil-model-c11}
.\end{align}
\end{subequations}
where $\text{PI}^w$ is the productivity index (PI) of well $w$,  $\overline{p}_{\text{res}}^{w}$ is the reservoir pressure, $\overline{q}_{\text{liq}}^{\text{max}}$ and $\overline{q}_{\text{inj}}^{\text{max}}$ establish the platform's processing capacity, $\overline{p}_{\text{sep}}$ is the separator pressure, $\overline{q}_{\text{inj,min}}^w$ and $\overline{q}_{\text{inj,max}}^w$ gas-injection bounds on wells when operating, and $M$ is a sufficiently large constant.
These terms are parameters of the model, which are differentiated from variables by a monospaced typography.

The objective aims to maximize the total oil production, which promotes economic gain.
All variables are restricted to be non-negative, which is omitted for simplicity.
Effectively, the only decision variables are the lift-gas injection rates $q_{\text{inj}}^{w}$ and the pressure drops $\Delta p_{\text{choke}}^{c}$ across choke valves, as all the others are uniquely determined by assignments of those.
Exceptions are the $y^{c}$ binary variables, which determine whether component $c$ is active (open) or not (closed), and the binary variables $t_{\text{GL}}^w$ that impose limits on lift-gas injection when a well is operating. 

A brief description of each constraint follows:
\begin{itemize}
    \item \eqref{eq:oil-model-c1} ensures that the mixture fractions correspond to the flow rates in each component of the network. However, the relations are modeled with bilinear terms to avoid numerical problems involving the division operation, e.g., the relation $wct^w = q_{\text{water}}^w/q_{\text{liq}}^w$ is modeled as $\text{wct}^w\cdot q_{\text{liq}}^w = q_{\text{water}}^w$.
    
    \item \eqref{eq:oil-model-c2} models the mixture of the production streaming into a manifold from its connected wells without flow splitting, which is allowed in very peculiar conditions \citep{Camponogara:Flow-Split:2024}.
    
    \item \eqref{eq:oil-model-c4} defines the VLP curve for each component of the network.
    
    \item \eqref{eq:oil-model-c5} is the Inflow Performance Relationship (IPR), modeled here as a linear constraint, but could be treated as another black-box function for increased precision (see, for example, \cite{Vogel:JPT:1968}).\footnote{%
        Considering the approximation of the black-box function $p_{\text{us}}^{w}$ as a multilinear interpolation model, notice that $p_{\text{us}}^w=0$ when well $w$ is inactive because the convex combination of well $w$'s variables add up to $y^w$.  
        Consequently, when well $w$ is closed $\implies y^w=0 \implies p_{\text{us}}^w=q_{\text{liq}}^w=q_{\text{oil}}^w = q_{\text{water}}^w = q_{\text{gas}}^w=0$.
        Therefore, constraint \eqref{eq:oil-model-c5} is satisfied with $q_{\text{liq}}^{w}=0$.
    }

    \item \normalfont(\ref{eq:oil-model-c12a}-\ref{eq:oil-model-c12b}) ensure that the manifold is active if, and only if, at least one of the connected wells is active.
    
    \item \normalfont(\ref{eq:oil-model-c6a}-\ref{eq:oil-model-c6b}) implement the platform constraints limiting the liquid production (separator capacity) and the total lift-gas injected (compressor capacity).
    
    \item \normalfont(\ref{eq:oil-model-c7}-\ref{eq:oil-model-c9}) establish the pressure relations between the components through choke valves, ensuring pressure balance. 
    
    \item \eqref{eq:oil-model-c10} controls the activation ($y^c=1$) and deactivation ($y^{c}=0$) of each component, ensuring feasibility of the model when the choke valves are completely closed.
    
    \item \eqref{eq:oil-model-c11} ensures that, if well $w$ is operating with lift-gas injection ($t_{\text{GL}}=1$), the operational limits are respected.
\end{itemize}

Note that problem \eqref{eq:oil-model} can  be reformulated to fit the format of problem \eqref{eq:problem}, enabling the use of our relax-fix-and-exclude algorithm to compute a global optimum.
The black-box functions are the VLP curves $p_{\text{us}}^c(\cdot)$ that model the pressure drop through the components, \new{which we approximate using multilinear interpolation}.
Although \eqref{eq:oil-model} contains linear and bilinear constraints not explicitly present in \eqref{eq:problem}, the former are trivially handled by the algorithm, and the latter \new{can relaxed in the same way as the multilinear interpolants}, as discussed in Section~\ref{sec:milp-relaxation}.

\section{Computational Experiments}\label{sec:experiments}

For our computational experiments, we consider an oil production optimization problem on offshore production platforms based on real-world cases from Petróleo Brasileiro S.A. (Petrobras). We compare our proposed algorithm, Relax-Fix-and-Exclude, with the Gurobi optimization software~\cite{gurobi}, the BARON solver~\cite{baron2018}, and the SCIP optimization suite~\cite{SCIP}.

\subsection{Problem Instances}

\new{%
We consider nine scenarios (S1-S9) for the oil production optimization problem, each with a different arrangement of wells and manifolds.
For each scenario, we generate eight random instances by uniformly sampling the problem parameters from realistic intervals.
In all instances of all scenarios, the oil wells have the same geometry.
However, because the each well is drilled at different location of the reservoir, each has a unique VLP curve.
}

The VLP curves are computed through a simulation software, which produces the look-up tables.
We consider the GOR and WCT of each well as fixed parameters.
\new{Therefore, the resulting VLP curve for each well $w\in \mathcal{W}$ has breakpoints sampled from \emph{three} dimensions: $q_{\text{liq}}^{w}$, $p_{\text{ds}}^{w}$, and $\text{iglr}^{w}$.}
Each manifold also requires a VLP curve to model the pressure drop in the pipeline that connects the manifold to the oil platform (riser), as can be seen in Figure \ref{fig:esquema_plataforma}.
Because the oil-water-gas ratios of the mixture flowing out of the manifold is a function of the production from the connected wells, the VLP curve of the manifold must account for changes in the GOR and WCT inputs.
However, we consider that the choke valve downstream of each manifold is always completely open, such that $p_{\text{ds}}^m=p_{\text{sep}}$ for every manifold $m$.
\new{In other words, the VLP curve for each manifold $m\in \mathcal{M}$ has breakpoints sampled from \emph{four} dimensions: $q_{\text{liq}}^{m}$, $\text{iglr}^{m}$, $\text{gor}^{m}$, and $\text{wct}^{m}$.}

\new{%
The scenarios are ordered from S1 to S9 by complexity and expected difficulty of the instances, as can be seen in Table~\ref{tab:scenarios}.
The complexity increase comes from adding more components (oil wells and manifolds) to the model, which are equivalent to adding new multilinearly-interpolated black-box functions.
In particular, the addition of a manifold significantly increases the difficulty of the instance, even when compared to the addition of an oil well, as the manifold functions have an extra domain dimension.
}

\begin{table}[ht]
\centering
\caption{\new{Overview of the MINLP problem sizes in our experiments. For each scenario, eight instances were randomly generated by uniformly sampling the problem parameters. ``Nonzeros'' is the number of nonzero elements in the constraint matrix, indicating the problem sparsity.}} \label{tab:scenarios}
\label{tab:instances-overview}
\small
\new{%
\begin{tabular}{c  cc  cc  cc  c  c}
\toprule
Scenario 
  & \(|\mathcal{W}|\) 
  & \(|\mathcal{M}|\) 
  & \(|\bm{\xi}|\) 
  & \(|\bm{\lambda}|\) 
  & \(|\bm{y}|\) 
  & Rows 
  & Columns 
  & Nonzeros \\
\midrule
S1 & 1 & 0  &  50  &  4000 & 2  &  1889 &  4267 & 33505 \\
S2 & 2 & 0  & 100  &  8000 & 4  &  3777 &  8529 & 67011 \\
S3 & 3 & 0  & 150  & 12000 & 6  &  5690 & 12791 & 100542 \\
S4 & 4 & 1  & 255  & 46000 & 9  &  7549 & 47520 & 428042 \\
S5 & 5 & 1  & 305  & 50000 & 11 &  9391 & 51782 & 461510 \\
S6 & 6 & 1  & 355  & 54000 & 13 & 11149 & 56044 & 494886 \\
S7 & 7 & 2  & 460  & 88000 & 16 & 13296 & 90773 & 822666 \\
S8 & 8 & 2  & 510  & 92000 & 18 & 14857 & 95035 & 855861 \\
S9 & 9 & 2  & 560  & 96000 & 20 & 16681 & 99297 & 889303 \\
\bottomrule
\end{tabular}
}
\end{table}

\subsection{Relax-Fix-and-Exclude Implementation}

Our algorithm was implemented in Julia~\cite{Julia-2017}, using the JuMP library~\cite{Lubin2023} to manipulate the mathematical programming models.
We used Gurobi as the \texttt{NLPSolver} and the \texttt{MILPSolver}.
Note that Gurobi provides global optimality guarantees (up to specified tolerance) for non-convex quadratic problems, thus fulfilling our algorithm requirements.

We implement the exclusion cut through a custom SOS2 emulation using auxiliary binary variables, as discussed in Section~\ref{sec:excluding}.
\new{In particular, we emulate the SOS2 constraints using the \emph{lambda model}~\citep{dantzigSignificanceSolvingLinear1960}, also known as \emph{convex combination method}~\citep{vielma2010modeling}.}
This approach has the advantage of not being invasive in the solver's branching rules, although it results in larger MILP problems.

\subsection{Results and Discussion}

In our computational experiments, we measure the performance of the solution methods in terms of the best solution found within a 1-hour budget.
\new{We compare Relax-Fix-and-Exclude with three widely-used solvers for mixed-integer nonconvex problems: Gurobi~v11.0.3~\citep{gurobi}, SCIP~v9.0.0~\citep{SCIP}, and BARON~v24.5.8~\citep{baron2018}.
Recall that RFE requires the MILP relaxation to be formulated with the SOS2 constraints emulated through binary variables.
Furthermore, Gurobi and SCIP have their own implementation (native support) of SOS2 constraints, while BARON does not.
Thus, to make a fair evaluation, we consider Gurobi and SCIP with the MINLP problem formulated with both the SOS2 constraints and the emulation using binary variables.
In our experiments  ``Gurobi (SOS2)'' and ``SCIP (SOS2)'' indicate the use of the MINLP formulation with the solver's native implementation of the SOS2 constraints, while ``Gurobi (Bin.)'' and ``SCIP (Bin.)'' indicate that the formulation of the SOS2 with binary variables was used.}
All experiments reported below were performed on a workstation equipped with a 12th Gen Intel(R) Core(TM) i7-12700 processor (20 cores, 40 threads), 64 GB of RAM, and running Ubuntu 24.10 64-bit.

\new{%
The overall performance of the solution methods in terms of instances solved to optimality within the time limit is illustrated in Figure~\ref{fig:instances-time}.
We notice a significant performance gap between Gurobi (with either formulation) and RFE, in comparison to BARON and SCIP.
Furthermore, RFE was the solution method capable of solving most instances within the time limit.
}

\begin{figure}[ht]
    \centering
    \includegraphics{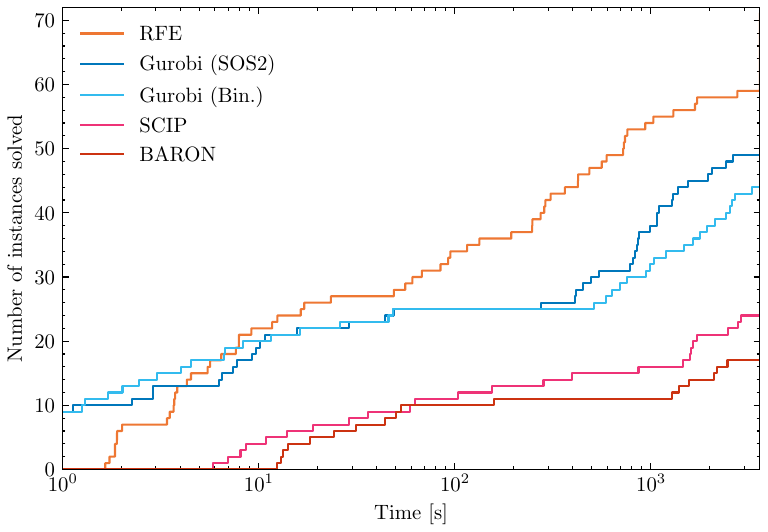}
    \caption{\new{Total number of instances solved to optimality by each solution method within the time limit.}}
    \label{fig:instances-time}
\end{figure}

\new{%
Table~\ref{tab:instances-solved} shows further details of the number of instances solved within each scenario.
We see that RFE and Gurobi were the only solution methods able to find feasible solutions to all instances in our experiments.
In contrast, BARON and SCIP were deeply impacted by the addition of a manifold in scenarios 4 to 9, indicating poor scalability with the input dimension of the multilinear interpolants, or, equivalently, with the number of variables in the nonlinear constraints.
Furthermore, RFE outperforms Gurobi in total number of instances solved to optimality mostly due to its better performance on the most difficult instances (scenarios 7 to 9).
}

\begin{table}[ht]
\centering
\caption{\new{Number of instances solved to optimality in each scenario of the oil production optimization problem, within 1 hour. The value in parenthesis indicates the number of instances for which the solution methods found at least one non-trivial feasible solution. All scenarios have the same number of instances (8).}} \label{tab:instances-solved}
\new{%
\begin{tabular}{l
                cccccccccc}
\toprule
Solver & S1 & S2 & S3 & S4 & S5 & S6 & S7 & S8 & S9 & Total \\
\midrule
RFE              & 8 (8) & 8 (8) & 8 (8) & 8 (8) & 8 (8) & 8 (8) & 6 (8) & 3 (8) & 2 (8) & 59 (72) \\
Gurobi (SOS2)    & 8 (8) & 8 (8) & 8 (8) & 8 (8) & 6 (8) & 8 (8) & 2 (8) & 1 (8) & 0 (8) & 49 (72) \\
Gurobi (Bin.)    & 8 (8) & 8 (8) & 8 (8) & 7 (8) & 6 (8) & 5 (8) & 1 (8) & 0 (8) & 1 (8) & 44 (72) \\
SCIP             & 8 (8) & 8 (8) & 8 (8) & 0 (1) & 0 (0) & 0 (0) & 0 (0) & 0 (0) & 0 (0) & 24 (25) \\
BARON            & 8 (8) & 7 (7) & 2 (3) & 0 (0) & 0 (0) & 0 (0) & 0 (0) & 0 (0) & 0 (0) & 17 (18) \\
\bottomrule
\end{tabular}}
\end{table}

\new{%
Suppose we restrict ourselves to the easy instances (without manifolds) from scenarios 1 to 3. In that case, we see mixed results, with Gurobi outperforming RFE most of the time, while SCIP and BARON are significantly worse than either.
Table~\ref{tab:times} summarizes these results in columns S1 to S3.
Notice that the easier the instance, the higher the margin that Gurobi has over RFE, particularly through the problem formulation that uses binary variables.
This explains the better performance of Gurobi indicated by Figure~\ref{fig:instances-time} for small time budgets (under 5 seconds).
However, the conclusion is reversed for the larger instances, as shown in Table~\ref{tab:times}, columns S4 to S6.
In fact, if we take into consideration all instances that both RFE and Gurobi (SOS2) solved to optimality in scenarios 3 to 9, RFE was on average 52\% faster than Gurobi (SOS2).
}

\begin{table}[ht]
\centering
\caption{\new{Average time to solve the instances to optimality in scenarios 1 to 6. The result is absent for SCIP and BARON in columns S4 to S6 because those solvers did not find optimal solutions to any instances in those scenarios. An asterisk (*) indicates that the solver did \emph{not} find an optimal solution for some instances of the respective scenario, and the total time budget was considered for the average in those cases.}} \label{tab:times}
\new{%
\begin{tabular}{l
                cccccccccc}
\toprule
Solver          & S1          & S2          & S3             & S4             & S5             & S6          \\
\midrule
RFE             &        3.13 &         3.85 & \textbf{7.86} & \textbf{353.30}& \textbf{335.38}& \textbf{313.21}\\
Gurobi (SOS2)   &        0.83 &         3.63 &        17.51  &         924.85 &        1571.51*&        835.97  \\
Gurobi (Bin.)   &\textbf{0.49}& \textbf{2.14}&        15.91  &        1769.34*&        2035.02*&       2030.69* \\
SCIP            &       21.91 &       241.43 &      2409.12  &             -- &           --   &            --  \\
BARON           &       37.38 &      1271.03*&      3271.66* &             -- &           --   &            --  \\
\bottomrule
\end{tabular}
}
\end{table}

\new{%
In scenarios 7 to 9, for which none of the methods solves all instances to optimality, we evaluate performance in terms of the average primal-dual gap.
Because SCIP and BARON did not find feasible solutions to the instances in those scenarios, they are excluded from the analysis.
The results summarized in Table~\ref{tab:gap} indicate that even when RFE does not find an optimal solution, the returned candidate solution is still of high quality.
In fact, in all instances of scenarios 7 to 9, RFE terminated with a gap below 1\%, significantly better than Gurobi with both formulations.
Furthermore, analyzing all instances in which neither RFE nor Gurobi proved optimality, RFE resulted in a gap \emph{at least} 35\% smaller, and on average 91\% smaller.
}

\begin{table}[ht]
\centering
\caption{\new{Percentual gap of the different solution methods in scenarios 7 to 9. The value shown is the average (maxium) over all instances.}} \label{tab:gap}
\new{%
\begin{tabular}{l
                ccccccc}
\toprule
Solver          & S7          & S8          & S9          \\
\midrule
RFE             & \textbf{0.01  (0.03)} & \textbf{0.12 (0.33)}  & \textbf{0.13 (0.57)}   \\
Gurobi (SOS2)   &         3.57 (24.65)  &        19.27 (49.31)  &         32.17 (109.65) \\
Gurobi (Bin.)   &         9.04 (30.61)  &        63.28 (229.00) &         85.75 (356.28) \\
\bottomrule
\end{tabular}
}
\end{table}

\new{%
We further illustrate this aspect of RFE through two instances from scenarios 7 and 9, for which the performance of RFE and Gurobi is illustrated in Figure~\ref{fig:results_example}.
In both examples, RFE takes longer than Gurobi to find a non-trivial feasible solution, but the solution it finds is already of very high quality (low gap).
Indeed, this behavior was observed in all of our experiments, that is, over all instances, the largest gap observed for the first feasible solution found by RFE was 1.2\%.
}

\begin{figure}[H]
    \centering
    \begin{subfigure}[t]{0.49\textwidth}
        \includegraphics[width=\textwidth]{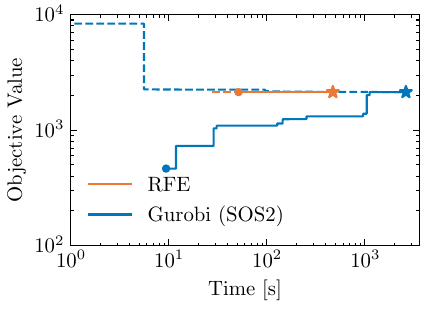}
        \caption{S7, Instance 1}
    \end{subfigure}
    \begin{subfigure}[t]{0.49\textwidth}
        \includegraphics[width=\textwidth]{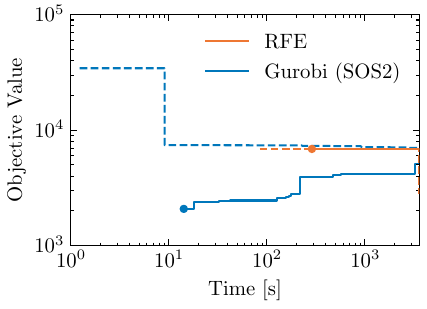}
        \caption{S9, Instance 8}
    \end{subfigure}
    \caption{Lower bound (objective value) progression during the time limit on two large instances of the oil production optimization problem. Dashed lines indicate upper (dual) bounds. Markers indicate the first feasible solution found and the optimal (if found).}
\label{fig:results_example} 
\end{figure}


\section{Conclusions} \label{sec:conclusion}

This paper introduced a novel algorithm for Mixed-Integer Nonlinear Programming (MINLP) problems involving multilinear interpolations of look-up tables.
The algorithm leverages a piecewise-linear relaxation that defines the convex envelope of the multilinear constraints, enabling efficient exploration of the solution space through variable fixing and exclusion strategies.
We have provided proof of convergence for the algorithm, demonstrating that it always returns the optimal solution or a valid statement of infeasibility or unboundedness.

Our computational experiments on oil production optimization problems demonstrated the effectiveness of the proposed method, which consistently outperformed all state-of-the-art solvers.
The results show that our algorithm, RFE, not only was able to provide optimal solutions faster, but also that it is a better heuristic approach (i.e., when used with a limited time budget) by providing better feasible solutions.
\new{On the instances with one manifold, RFE was able to find an optimum for all of them, including those that the alternative solvers could not solve to optimality within the time budget.  
Further, for the instances with two manifolds, the average gap of RFE was orders of magnitude smaller than the others.}
We highlight that these results corroborate our initial intuition that the proposed \new{MILP relaxation is of high quality and} results in a better enumeration of the combinatorial solution space.

While the results are promising, several avenues for future research could further enhance the performance and applicability of the algorithm.
First, the proposed relaxation can be applied to a broader range of nonlinearities, as it is not inherently tailored to multilinear constraints.
\new{Second, solving the MILP relaxations remains the primary computational bottleneck, consuming, on average, 85\% of the total runtime.}
To mitigate this, future implementations could explore methods for reusing solver information across iterations.
For instance, generating warm-starts using incumbent solutions or recycling parts of the branch-and-bound tree unaffected by exclusion cuts could significantly reduce computational overhead.

Additionally, we expect the runtime to be considerably reduced by a more efficient implementation of the exclusion cuts over the SOS2 constraints.
For example, implementing the cuts through modifications of the branching rules, \new{exploiting the classical approach of \citeauthor{bealeBranchBoundMethods1979}~\cite{bealeBranchBoundMethods1979}}, could result in a more efficient formulation of the MILP relaxation, as it would have a significantly reduced number of integer variables.

\new{%
Another approach would be to explore the formulations for SOS2 constraints.
The literature presents multiple alternatives~\citep{vielma2010modeling,Silva:EJOR:2014} that might suit combinatorial cuts better than the convex combination, used in our experiments.
Furthermore, the entire problem could admit different formulations based on the choice of representation of the SOS2 constraints using binary variables.
Even using the convex combination, as in our experiments, additional choice constraints could be used to significantly reduce the number of continuous variables in our MILP relaxation.
More precisely, one could formulate the problem using only $n$ continuous $\xi$ variables, and $2^n$ continuous $\lambda$ variables.
That would introduce products between those and the SOS2 binary variables, which could be handled through McCormick linearization in an extended formulation.
}

\backmatter

\bmhead{Acknowledgments}

This research was funded in part by Petr\'oleo Brasileiro S.A. (Petrobras) under grant SAP N$^{\text{o}}$ 4600677797, CAPES under grant 88887.827578/2023-00, and CNPq under grants 403092/2024-8, 402099/2023-0, and 308624/2021-1.

\bmhead{Code and Data Availability}

Our complete implementation of the Relax-Fix-and-Exclude algorithm as well as all necessary data to reproduce our results is available at \url{https://github.com/gos-ufsc/pwnl-oil}.

\begin{appendices}

\end{appendices}


\bibliography{references} 

\end{document}